\DeclareMathOperator*{\essinf}{ess\,inf}
\newtheorem{prop}{Proposition}[section]
\newtheorem{lemma}{Lemma}[section]
\newtheorem{corollary}{Corollary}[section]
\newtheorem{assumption}{Assumption}[section]
\newtheorem{defi}{Definition}[section]
\newtheorem{rem}{Remark}[section]
\title{Pathwise stochastic control and a class of stochastic partial differential equations\\
}
 \author[1]{Neeraj Bhauryal}
 \author[2]{Ana Bela Cruzeiro}
 \author[3]{Carlos Oliveira}
 \affil[1]{Grupo de Física Matemática Univ. de Lisboa, Portugal}
\affil[2]{GFMUL and Dep. de Matemática, Instituto Superior Técnico, Lisboa, Portugal}
\affil[3]{\textit{Dep. of Industrial Economics and Technology Management, NTNU, Norway}
\\
\textit{and ISEG - School of Economics and Management, Uni. de Lisboa}\\
\textit{Research in Economics and Mathematics, CEMAPRE, Portugal}\\
}
\affil[1]{\textit{Email: nsbhauryal@fc.ul.pt}}
\affil[2]{\textit{Email: ana.cruzeiro@tecnico.ulisboa.pt}}
\affil[3]{\textit{Email: carloso.m.d.s.oliveira@ntnu.no}}
\begin{document}
\date{}
\maketitle
\makeatletter 
\renewcommand\theequation{\thesection.\arabic{equation}}
\@addtoreset{equation}{section}
\makeatother 

\begin{abstract}
We consider a pathwise stochastic optimal control problem and study the associated (not necessarily adapted) Hamilton-Jacobi-Bellman stochastic partial differential equation. We show that the value process is the unique solution of this equation, in the viscosity sense. Finally, when it is well defined, we discuss some properties of the optimal drift. \color{black}
\it  \rm

\end{abstract}

\vskip 7mm

\section{ Introduction}\label{section1.}
In classical stochastic control problems, one aims  to minimize (or maximize) some performance criterium, described by a functional of the form 
\begin{align*}
I_{t,x}(Z,u)&=E\left[\int_t^T f(s,Z_s,u_s)ds+g(Z_T)\right]\\
dZ_s&=\alpha(s,Z_s,u_s)ds+\sigma(s,Z_s,u_s)dW_s,\quad Z_t=x\text{ and }0\leq t\leq s\leq T,
\end{align*}
where $f,g,\alpha,\sigma$ are deterministic functions, and $u$ is a control in a given set $\cal U$. 
\color{black} One wants therefore to find the so-called value function $\Xi(t,x)=\inf_{u\in{\mathcal{U}}} I_{t,x}(Z,u)$, or equivalently the optimal control $u^*$ such that  $\Xi(t,x)=I_{t,x}(Z,u^*)$,
when the minimum exists. If the problem is formulated within a Markovian setup,  the control process is of the form $u^*\equiv u(s,Z_s)$, where $u$ is a deterministic function, and the value function solves a deterministic partial differential equation (PDE), namely the Hamilton-Jacobi-Bellman (HJB) equation (see, for instance, Chapter III of \cite{fleming2006controlled}).

This field has been growing in the last decades (see, for instance, \cite{fleming2006controlled,fleming2012deterministic,krylov2008controlled,yong1999stochastic}), mostly driven by its applications to Finance (\cite{pham2009continuous}), Insurance (\cite{schmidli2007stochastic}), Engineering (\cite{aastrom2012introduction,chen1995linear}), and Physics (\cite{nelson2021quantum}). In Finance and Insurance, stochastic control plays an important role in topics such as portfolio selection, optimal liquidation or in the definition of the optimal reinsurance strategies. In Physics and Engineering, the minimization of the previous action function when we choose $f(s,x,u)=\frac{1}{2}\vert u\vert^2+V(x)$, $\alpha(s,x,u)=u$, and $\sigma(s,x,u)=\sqrt{\nu}$ has great importance. {Indeed, this function $f$ is the Lagrangian of the well known least action principle in classical mechanics. This means that $I_{t,x}$ can be regarded as a regularisation of this deterministic action functional, where the time derivative of $Z_s$, which is now divergent, becomes the drift of the diffusion process.} 
The following result is described by Fleming and Soner \cite{fleming2006controlled}, in Example 8.2 of Chapter III.
\begin{prop}\label{prop deterministic HJB equation}
Let $v$ be a classical solution to the HJB equation and terminal condition
\begin{align}\label{deterministic HJB equation}
\frac{\partial v}{\partial t}(t,x)-\frac{1}{2}\vert\nabla v(t,x)\vert^2+\frac{\nu}{2}\Delta v(t,x)+V(x)=0\quad\text{and}\quad v(T,x)=S(x).
\end{align}
Then, $v(t,x)=\Xi(t,x)$, and $u^*(t,x)=-\nabla \Xi(t,x)$, when $f(s,x,u)=\frac{1}{2}\vert u\vert^2+V(x)$, $\alpha(s,x,u)=u$.
\end{prop}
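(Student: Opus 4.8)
The plan is to run the classical verification argument: apply It\^o's formula to the candidate $v$ along an arbitrary controlled trajectory, use the HJB equation to eliminate the time-derivative and Laplacian terms, and then recognise that the resulting integrand is a perfect square minimised precisely by the feedback $u=-\nabla v$. First I would fix an admissible control $u\in\mathcal U$ with associated state process $Z$ solving $dZ_s=u_s\,ds+\sqrt{\nu}\,dW_s$, $Z_t=x$. Applying It\^o's formula to $s\mapsto v(s,Z_s)$ (recalling $\sigma=\sqrt{\nu}$, so the second-order term is $\tfrac{\nu}{2}\Delta v$) and integrating from $t$ to $T$ gives
\begin{align*}
v(T,Z_T)-v(t,x)=\int_t^T\Big(\tfrac{\partial v}{\partial t}+u_s\cdot\nabla v+\tfrac{\nu}{2}\Delta v\Big)(s,Z_s)\,ds+\sqrt{\nu}\int_t^T\nabla v(s,Z_s)\cdot dW_s.
\end{align*}
Taking expectations, assuming the stochastic integral is a true martingale so that its mean vanishes, the terminal condition $v(T,\cdot)=S$ yields
\begin{align*}
v(t,x)=E[S(Z_T)]-E\Big[\int_t^T\Big(\tfrac{\partial v}{\partial t}+u_s\cdot\nabla v+\tfrac{\nu}{2}\Delta v\Big)(s,Z_s)\,ds\Big].
\end{align*}

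Next I would substitute the HJB equation \eqref{deterministic HJB equation}, rewritten as $\tfrac{\partial v}{\partial t}+\tfrac{\nu}{2}\Delta v=\tfrac12|\nabla v|^2-V$, into the integrand and compare the result with the cost $I_{t,x}(Z,u)=E\big[\int_t^T(\tfrac12|u_s|^2+V(Z_s))\,ds+S(Z_T)\big]$. A direct computation, completing the square, gives
\begin{align*}
I_{t,x}(Z,u)-v(t,x)=E\Big[\int_t^T\Big(\tfrac12|u_s|^2+u_s\cdot\nabla v+\tfrac12|\nabla v|^2\Big)(s,Z_s)\,ds\Big]=\tfrac12\,E\Big[\int_t^T|u_s+\nabla v(s,Z_s)|^2\,ds\Big]\ge0.
\end{align*}
This shows $I_{t,x}(Z,u)\ge v(t,x)$ for every $u\in\mathcal U$, hence $\Xi(t,x)\ge v(t,x)$; moreover equality holds if and only if $u_s=-\nabla v(s,Z_s)$ for a.e.\ $s$, which identifies the optimal feedback as $u^*(t,x)=-\nabla v(t,x)$.

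Finally I would close the loop by letting $Z^*$ solve the closed-loop equation $dZ^*_s=-\nabla v(s,Z^*_s)\,ds+\sqrt{\nu}\,dW_s$ and verifying that $u^*_s=-\nabla v(s,Z^*_s)$ is admissible; along this trajectory the nonnegative square vanishes identically, so $I_{t,x}(Z^*,u^*)=v(t,x)$ and therefore $\Xi(t,x)=v(t,x)$ with $u^*(t,x)=-\nabla\Xi(t,x)$. The main obstacle is technical rather than conceptual: one must justify that the local martingale $\int_t^T\nabla v\cdot dW_s$ is a genuine martingale with zero expectation, which requires integrability of $\nabla v$ along the trajectories (via growth bounds on $v$ and $\nabla v$ together with moment estimates on $Z$), and one must ensure the closed-loop SDE admits a solution rendering $u^*$ admissible. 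Under the standing regularity hypotheses guaranteeing $v\in C^{1,2}$ with controlled growth, both points follow from standard SDE moment estimates and a localisation/Gr\"onwall argument.
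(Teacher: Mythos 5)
Your verification argument is correct: It\^o's formula combined with the HJB identity $\tfrac{\partial v}{\partial t}+\tfrac{\nu}{2}\Delta v=\tfrac12|\nabla v|^2-V$ reduces the cost gap to the completed square $\tfrac12\,E\big[\int_t^T|u_s+\nabla v(s,Z_s)|^2\,ds\big]\ge 0$, with equality exactly at the feedback $u^*=-\nabla v$, which is the standard verification theorem. The paper itself offers no proof of this proposition---it simply cites Fleming and Soner (Chapter III, Example 8.2), whose verification-theorem argument is essentially the one you wrote---so your approach coincides with the intended one, including the technical caveats (martingale property of the stochastic integral and solvability of the closed-loop SDE) that you correctly flag.
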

In its turn, by using a change of variable, one can also see that the optimal control satisfies the following PDE:
$$
\frac{\partial u}{\partial t}(t,x)+(u\cdot\nabla)u(t,x)+\frac{\nu}{2}\Delta u(t,x)-\nabla V(x)=0\quad\text{and}\quad u(T,x)=-\nabla S(x).
$$

Recently, in \cite{cruzeiro2021time}, the authors have addressed this problem in a more general setup. They consider a backward and a forward action functional that can be controlled and stopped at any moment before time $T$. The new forward functional and control problem are given by 
\begin{align*}
\tilde I_{t,x}(Z,u,\tau)&=E\left[\int_t^{\tau\wedge T} \frac{1}{2}\vert u_s\vert^2+V(Z_s)ds+g(Z_ {\tau\wedge T})\right]\\
\tilde \Xi(t,x)&=\inf_{(u,\tau)\in{\cal U}\times{\cal T}}\tilde I_{t,x}(Z,u,\tau)
\end{align*}
The backward control problem can be formalized similarly considering a decreasing filtration (see Equations (3), (4) and (15) in \cite{cruzeiro2021time}). The solution for each one of these backward and forward control problems can be obtained as a viscosity solution to a free-boundary problem. We can state a similar result to Proposition \ref{prop deterministic HJB equation}, as a consequence of Proposition 4.4 and Theorem 5.1 in \cite{cruzeiro2021time}.\footnote{Conditions to guarantee existence of solution to the boundary problems \eqref{deterministic HJB equation} and \eqref{deterministic HJB equation 1} can be found in the respective references. }
\begin{prop}
Let $v$ be a classical solution to the HJB equation and terminal condition
\begin{align}\label{deterministic HJB equation 1}
\max\left\{-\frac{\partial v(t,x)}{\partial t}+\frac{1}{2}\vert\nabla v(t,x)\vert^2-\frac{\nu}{2}\Delta v(t,x)-V(x),v(t,x)-S(x)\right\}=0\quad\text{and}\quad v(T,x)=S(x).
\end{align}
Then, $v(t,x)=\tilde \Xi(t,x)$, $u^*(t,x)=-\nabla \tilde\Xi(t,x)$, and the optimal stopping time is given by $\tau^*=\inf\{s>t\,:\tilde \Xi(s,Z_s)\geq S(Z_s)\}$, when $f(s,x,u)=\frac{1}{2}\vert u\vert^2+V(x)$, $\alpha(s,x,u)=u$.
\end{prop}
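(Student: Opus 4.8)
The plan is to prove a verification theorem for the combined control-and-stopping problem, splitting the variational inequality \eqref{deterministic HJB equation 1} into its pointwise content and then establishing the two inequalities $v\le\tilde\Xi$ and $v\ge\tilde\Xi$ separately. Writing the state dynamics as $dZ_s=u_s\,ds+\sqrt{\nu}\,dW_s$ with generator $\mathcal{L}^u\varphi=\partial_t\varphi+u\cdot\nabla\varphi+\tfrac{\nu}{2}\Delta\varphi$ along a control $u$, equation \eqref{deterministic HJB equation 1} encodes exactly three facts at every point: first, $v\le S$; second, $-\partial_t v+\tfrac12|\nabla v|^2-\tfrac{\nu}{2}\Delta v-V\le 0$; and third, the complementarity condition that at least one of these is an equality. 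I will use the first two for the lower bound and the third to show the candidate pair is optimal. The algebraic fact that ties everything together is $\inf_u\{\tfrac12|u|^2+u\cdot\nabla v\}=-\tfrac12|\nabla v|^2$, attained uniquely at $u=-\nabla v$, which is how the quadratic running cost links to the Hamiltonian in the PDE (here the stopping cost $g$ is understood to coincide with the obstacle $S$).

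For $v\le\tilde\Xi$, I would fix an arbitrary admissible pair $(u,\tau)$ and apply It\^o's formula to $v(s,Z_s)$ on $[t,\tau\wedge T]$, using that $v$ is $C^{1,2}$ as a classical solution. Taking expectations and discarding the (local) martingale term gives
\begin{align*}
v(t,x)=E\big[v(\tau\wedge T,Z_{\tau\wedge T})\big]-E\int_t^{\tau\wedge T}\Big(\partial_t v+u_s\cdot\nabla v+\tfrac{\nu}{2}\Delta v\Big)(s,Z_s)\,ds.
\end{align*}
The second component of \eqref{deterministic HJB equation 1} yields $\partial_t v+\tfrac{\nu}{2}\Delta v\ge\tfrac12|\nabla v|^2-V$, and with Young's inequality $u_s\cdot\nabla v\ge-\tfrac12|u_s|^2-\tfrac12|\nabla v|^2$ the integrand is bounded below by $-V(Z_s)-\tfrac12|u_s|^2$. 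Combining this with the obstacle condition $v(\tau\wedge T,Z_{\tau\wedge T})\le S(Z_{\tau\wedge T})$ (equality forced at $s=T$ by the terminal condition) gives $v(t,x)\le\tilde I_{t,x}(Z,u,\tau)$, and taking the infimum over $(u,\tau)$ gives $v\le\tilde\Xi$.

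For the reverse inequality and optimality I would rerun the same computation along the candidate $u^*_s=-\nabla v(s,Z^*_s)$ and $\tau^*=\inf\{s>t:v(s,Z^*_s)=S(Z^*_s)\}$ (which coincides with the stated hitting time since $v\le S$ everywhere). The choice $u^*=-\nabla v$ makes Young's inequality an equality, and for $s<\tau^*$ the trajectory lies in the continuation region $\{v<S\}$, where the complementarity condition forces the first component of \eqref{deterministic HJB equation 1} to vanish, so the PDE holds with equality along the path up to $\tau^*$. Continuity of $v$ and $S$ then gives $v(\tau^*,Z^*_{\tau^*})=S(Z^*_{\tau^*})$ on $\{\tau^*\le T\}$, while the terminal condition covers $\{\tau^*\ge T\}$. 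Every inequality in the lower-bound step becomes an equality, so $v(t,x)=\tilde I_{t,x}(Z^*,u^*,\tau^*)\ge\tilde\Xi(t,x)$, and the two bounds combine to $v=\tilde\Xi$ with $(u^*,\tau^*)$ optimal.

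The main obstacle is the integrability and admissibility bookkeeping rather than the algebra. To discard the martingale term and pass the expectation through It\^o's formula I expect to need a localization argument (stopping at exit times from balls) together with growth assumptions ensuring $E\int_t^{\tau\wedge T}|\nabla v(s,Z_s)|^2\,ds<\infty$ and $E\int_t^{\tau\wedge T}|u_s|^2\,ds<\infty$. The most delicate point is the well-posedness of the closed-loop equation $dZ^*_s=-\nabla v(s,Z^*_s)\,ds+\sqrt{\nu}\,dW_s$ and the admissibility of $u^*=-\nabla v$: this is precisely the caveat ``when it is well defined'' in the statement, and it is where regularity of $v$ (for instance locally Lipschitz $\nabla v$ with controlled growth) must enter. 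A secondary technical point is verifying that $\tau^*$ is a genuine stopping time and that $v(\tau^*,Z^*_{\tau^*})=S(Z^*_{\tau^*})$, which follows from pathwise continuity and the closedness of the stopping region $\{v=S\}$.
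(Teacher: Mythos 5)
Your verification argument is correct, but it follows a genuinely different route from the paper, because the paper in fact gives no proof of this proposition at all: it is stated as a direct consequence of Proposition 4.4 and Theorem 5.1 of \cite{cruzeiro2021time}, with a footnote deferring existence and regularity conditions to that reference, where the combined control-and-stopping problem is treated through dynamic programming and viscosity solutions of the associated free-boundary problem. You instead supply a self-contained classical verification theorem: It\^o's formula along an arbitrary admissible pair $(u,\tau)$, the completion-of-squares inequality $u\cdot\nabla v\ge -\tfrac12|u|^2-\tfrac12|\nabla v|^2$ combined with the PDE part of \eqref{deterministic HJB equation 1} and the obstacle condition $v\le S$ to obtain $v\le\tilde\Xi$, and then exactness of every inequality along the feedback pair $u^*=-\nabla v$ and $\tau^*$ the hitting time of $\{v=S\}$, using the complementarity condition in the continuation region (your observation that for $s\in(t,\tau^*)$ the path lies in $\{v<S\}$ by the very definition of the infimum over $s>t$ is the right way to justify that the PDE holds with equality along the path). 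This is the natural proof given the hypothesis that $v$ is a classical solution, and it is constructive: it produces the optimal pair rather than merely identifying the value function. What the paper's citation route buys is robustness: the viscosity/free-boundary framework of \cite{cruzeiro2021time} does not require $v\in C^{1,2}$ across the free boundary, where value functions of stopping problems typically satisfy only smooth fit rather than full $C^2$ regularity, whereas your argument leans on global smoothness to apply It\^o's formula, on integrability to discard the local martingale term, and on well-posedness and admissibility of the closed-loop equation $dZ^*_s=-\nabla v(s,Z^*_s)\,ds+\sqrt{\nu}\,dW_s$; you correctly flag all three caveats, and the paper's footnote on existence conditions plays essentially the same role.
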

In this case, one can easily see that the optimal control $u$ satisfies the boundary problem
\begin{align*}
&\frac{\partial u}{\partial t}(t,x)+(u\cdot\nabla)u(t,x)+\frac{\nu}{2}\Delta u(t,x)-\nabla V(x)=0,\quad (t,x)\in {\cal C}\\
&u(t,x)=-\nabla S(x),\quad (t,x)\in \partial{\cal C},
\end{align*}
where $\cal C$ is the so-called continuation region, which is defined as ${\cal C}=\{(t,x):\:\tilde \Xi(t,x)< S(x)\}$. The optimal stopping time can also be characterized in terms of cumulative distribution functions. Defining $q$ as $q(t,x)=P_{t,x}(\tau^*>\tilde T)$, we can obtain the following characterization:
\begin{align*}
&\frac{\partial q}{\partial t}+u^*\cdot\nabla q+\frac{\nu}{2}\Delta u=0,\quad (t,x)\in {\cal C}\\
&q(\tilde T,x)=1,\quad (\tilde T,x)\in {\cal C}\\
&q(\tilde t,\tilde x)=0,\quad (\tilde t,\tilde x)\in\partial{\cal C},
\end{align*}
where $t<\tilde{T}<\overline{t}$ and $\overline{t}=\sup\{t<T\,:(t,x)\in{\cal C}\}$. All these results require some regularity conditions that can be checked in \cite{cruzeiro2021time}.

In this paper, we  minimize a pathwise version of the previous functional, as in \eqref{random functional}. The value process is generally a non-adapted It\^o-type random field that can be obtained as a viscosity solution of a stochastic version of the HJB equation \eqref{deterministic HJB equation}, which is a stochastic partial differential equation (SPDE). We prove that the value process is the unique viscosity solution of this SPDE. When an optimal control drift exists, this drift also satisfies a SPDE. \color{black} 

Stochastic control problems with random functionals were first addressed by Lions and Souganidis in the collection of papers \cite{lions1998fully,lions2000fully,lions2000uniqueness}, where the authors propose a theory of stochastic viscosity solutions for the stochastic HJB equations that characterize the value process. Later, Buckdahn and Ma addressed the topic of stochastic viscosity solutions in \cite{buckdahn2001stochastic,BUCKDAHN2001205} using a different approach than of Lions and Souganidis.

A major difference in our setup is that solutions of the SPDE is not necessarily an adapted process, and, to our knowledge, equations of this type has not been studied before. The  kind of anticipativeness that has been considered in the literature appears (1) from choosing anticipative initial data (cf., for example \cite{Tusheng}) or (ii) in the context of backward SPDE's (cf., for example \cite{Pardoux}), but not from optimal control problems, as in our case. Let us consider a situation where solutions do not need to be adapted to a given filtration:
\begin{align*}
\begin{cases}
     du(t,x)= -\sqrt{\nu} \frac{\partial}{\partial x}u(t,x) dW(t)+\nu\left(\frac{\partial^2}{\partial^2 x} u+\frac{1}{2}|\frac{\partial}{\partial x} u|^2\right)\mathrm{d}t, \quad &\text{in } (0,T)\times \mathbb{R},\\
    u(T,x)=f(x), & \text{on } \mathbb{R}.
    \end{cases}
\end{align*}
for some $\nu>0$ and given terminal data. The explicit solution of this SPDE is given by $u(t,x)=\log \eta (t-T,x)$, where $\eta(t,x)$ is a positive solution of the stochastic heat equation
\begin{align*}
     d\eta(t,x)= -\sqrt{\nu}\frac{\partial}{\partial x} \eta(t,x) dW(t)+\nu \frac{\partial^2}{\partial^2 x}\eta(t,x)\mathrm{d}t, \quad &\text{in } (0,T)\times \mathbb{R}
\end{align*}
and it is given by $\eta(t,x)= \frac{1}{\sqrt{2\nu \pi t}}\int\limits_\mathbb{R} e^{-\frac{|x-y-\sqrt{\nu}W(t)|^2}{2\nu t}+f(y)}\mathrm{d}y$ (see Eq (2.3.9), page 53, of \cite{Lototsky}).

Buckdahn and Ma addressed a stepwise control problem, for which the value process is not adapted to the increasing filtration. The authors avoid using non-adapted stochastic calculus by constructing two auxiliary stochastic control problems that allow them to recover the solution of the original problem. Using the  Doss–Sussmann-type transformation, which they introduced previously in  \cite{buckdahn2001stochastic,BUCKDAHN2001205}, the authors prove that the value process is a unique viscosity solution to a certain SPDE with terminal data in \cite{buckdahn2007pathwise}.  In \cite{Burstein} a special kind of nonanticipative stochastic control problem is considered, by introducing nonanticipativity as a Lagrange multiplier. The approach reveals that pathwise control is in some sense equivalent to classical stochastic control with anticipative controls. The author in \cite{Rogers} studies a minimization of a stochastic action functional defined in discrete times by solving pathwisely deterministic control problems. The more recent work \cite{Cohen} considers also pathwise stochastic optimal control problems using rough path theory.

In Section \ref{non-adapted}, we recall the basic notions of non-adapted stochastic calculus, which will be used throughout the paper. 
 In Section \ref{section2.}, the control problem is introduced along with the set of assumptions required for our analysis.  In Section \ref{Stochastic HJB equation}, we establish the Bellman's optimality principle which allows us to prove the existence result and then we establish a comparison principle for viscosity solutions of the Stochastic HJB equation arising from the optimal control problem under consideration. Finally, in Section \ref{characterization}, a derivation of the SPDE for the optimal drift (when it is attained) is presented and  we discuss a concept of conserved quantities that should be relevant for our action functionals.
This section raises some new open problems, to be considered in the future, namely the well-posedness of the (non-adapted) SPDEs we have presented and the study of the conserved quantities.\color{black}

\section{Non-adapted stochastic calculus}\label{non-adapted}

From now on we fix, as our probability space, $\Omega =\{ \omega \in C( [t,T]; \mathbb R^n ), \omega (t)=x,  \omega ~\hbox{continuous} \}$, equipped with the topology of uniform convergence and with the $\sigma$-algebra generated by cylindrical sets. $P$ will be the standard Wiener measure. Denote by
$H$ the corresponding Cameron-Martin space, namely $H=\{ h:[t,T]\rightarrow \mathbb R^n : h(t)=0,  h ~\hbox{is a.c. and}~ \int_t^T |\frac{d}{ds} h (s)|^2 ds < +\infty \}$ with the inner product defined as $\langle g, h \rangle_H = \int^T_t \frac{\mathrm{d}g}{\mathrm{d}s}(s)\frac{\mathrm{d}h}{\mathrm{d}s}(s)\mathrm{d}s$.
If $ E |F|^p <+\infty$ for some $p\geq 1$, derivatives of $F$ in the directions of $H$ are defined, in the Malliavin calculus sense (cf. \cite{Malliavin}), as

$$D_h F (\omega )=\lim\limits_{\varepsilon \rightarrow 0} \frac{1}{\varepsilon} [F(\omega +\epsilon h )- F(w)],$$
the limit being taken almost surely (a.s.).
This derivative naturally gives rise, by Riesz representation theorem,  to a gradient operator $\boldsymbol\nabla F :\Omega \rightarrow H$ such that $\langle\boldsymbol{\nabla} F, h\rangle_H =D_h F$.
If we define
$$D_s F (\omega )= \frac{d}{ds} \boldsymbol{\nabla} F(\omega ),$$
we have $D_h F =\int_t^T D_s F \frac{d}{ds} h(s)ds$. For cylindrical functionals $F(w)= f(\omega (s_1 ) , ..., \omega (s_m ))$, with $f$ smooth, we have
$$D_s F(\omega )= \sum_{k=1}^m {\bf{1}}_{s<s_k} \partial_k f (\omega (s_1 ) , ..., \omega (s_m )).$$
The operator $D$ is a closed operator on the space $W_{1,2} (\Omega )$, the completion of cylindrical functionals with respect to the norm

$$\| F\|^2_{1,2}=E \Big( |F|^2 +\int_t^T |D_s F |^2 ds \Big).$$
In the non-adapted stochastic calculus developed by  Nualart and Pardoux (\cite{Nualart}), the It\^o-Skorohod integral of non necessarily adapted processes $\int u ~dW$ is defined as the $L^p $ limit,
 when it exists, of  sums
$$\sum_k {\cal M}_k (u)   (W (s_{k+1} )-W (s_k ))-\frac{1}{s_{k+1} -s_k} \int_{s_k}^{s_{k+1}}\int_{s_k}^{s_{k+1}} D_s u_{\tau} ds d\tau$$
where
$${\cal M} _k (u)= \frac{1}{s_{k+1} -s_k}  \int_{s_k}^{s_{k+1}} u_{\tau} d\tau,$$
when the mesh of the decomposition of the time interval goes to zero. It is an extension of the  It\^o integral.

We have the following commutation relation
$$D_s \int u~dW =\int D_s u_{\tau} dW_{\tau}  + u_s .$$
One can also define a Stratonovich-Skorohod integral $\int u \circ dW$ of non-adapted processes as the limit of sums

$$\sum_k {\cal M}_k (u)   (W (s_{k+1} )-W (s_k ))$$
As in the adapted case, Stratonovich integration obeys the rules of ordinary differential calculus. The relation between the two integrals is given by

$$\int_t^T u_s \circ dW_s =\int_t^T u_s dW_s +\frac{1}{2}\int_t^T  (\mathbb D u)_s ds,$$
where
$(\mathbb D u)_s = D_s^+ u_s + D_s^- u_s$, with
$$D_s^+ u_s =\hbox{lim}_{\tau \rightarrow s^+} D_s u_{\tau},~D_s^- u_s =\hbox{lim}_{\tau \rightarrow s^-} D_s u_{\tau},$$
the limit being taken in the $L^p$ sense. In the case where $u$ is adapted, $D_s^+ u_s =0$ and $\frac{1}{2}\int_t^T D_s^- u_s ds$ reduces to the usual It\^o contraction term.

We will need a It\^o-Wentzell formula for non-adapted stochastic integrals. Such a formula has been proved in \cite{Ocone}; we recall its Stratonovich version, which,
as long as we interpret the stochastic integrals in the sense of non-adapted calculus, looks formally similar to its adapted version.

Let $Z_s = Z_t + \int_t^s B_{\tau} \circ  dW_{\tau} +\int_t^s A_{\tau} d\tau$ and $F_s  (x) =F_t (x) +\int_t^s H_{\tau} (x) \circ dW_{\tau} +\int_t^s G_{\tau} (x)d\tau$. Then the following formula holds
\begin{align}\nonumber
F_s (Z_s )= F_t (Z_t ) &+\sum_k \int_t^s \nabla F_{\tau} (Z_{\tau}) \cdot B^k_{\tau} \circ dW^k_{\tau} +\int_t^s \nabla F_{\tau} (Z_{\tau}) \cdot A_{\tau}d\tau\\
&+ \int_t^s H_{\tau} (Z_{\tau}) \circ dW_{\tau} + \int_t^s G_{\tau} (Z_{\tau}) d\tau\label{Ito-Wentzell formula}
\end{align}
The above It\^o-Wentzell formula holds for non necessarily adapted stochastic processes $X$ and $F$ under a certain list of conditions on the coefficients of $X$ and
$F$ (cf. \cite{Ocone}). In our work it will only be applied to the process $Z$ as defined in \eqref{controlled diffusion} and to smooth and bounded (in space) functionals $F$, so our assumptions on the drift of $u$ are sufficient to ensure that the formula holds.



\section{Stochastic control problem }\label{section2.}
Let $Z\equiv\{Z_s\in\mathbb{R}^n:s\in I\equiv [0,T]\}$ be a stochastic process defined on the probability space specified in Section \ref{non-adapted}. Consider an energy function $L:\mathbb{R}^n \to \mathbb{R}$, a potential function $V:\mathbb{R}^n\to \mathbb{R}$ and a terminal value $S:\mathbb{R}^n\to \mathbb{R}$. We  define the random action functional $J_{t,x}$ as 
\begin{align}\label{random functional}
	J_{t,x}(Z,u )&=\int_{t}^{T} \left(\frac{u^2}{2}(\omega,s) + V(Z_s) \right)\mathrm{d}s+S(Z_T),\\
dZ_s&=u(\omega,s)\mathrm{d}s+\nu^{1/2}dW(s),\quad Z_t=x \text{ and }0\leq t\leq s\leq T,\label{controlled diffusion}
\end{align}
where $\nu$ is a positive constant, $u(\omega,s)$ (denoted by $u(s)$ in rest of the paper) is a control in the set of admissible controls ${\cal U}$, $W$ is a standard Brownian motion in $\mathbb{R}^n$. 
Our main goal is  to find the value process
\begin{align}\label{value process}
U_t(x)=\essinf_{u\in\mathcal{U}}J_{t,x}(Z,u ).
\end{align}
Equivalently, one can characterise the optimal process $u$ that minimizes the functional $J_{t,x}$, when it exists.
The value process $U_t$ is such that  $U_t(x;\omega)\leq J_{t,x}(Z,u;\omega)$ for almost all $\omega \in \Omega$. Contrary to the classical stochastic control problems, for this pathwise problem, one cannot expect to have adapted solutions since the value process at time $t$, $U_t$ depends on $Z_T$. Consequently, we may end up having an optimal control $u$, which is not adapted to the increasing filtration. 

We define ${\cal U}$ as the set of all  measurable processes (not necessarily adapted) such that they are uniformly bounded in $L^\infty(0,T)$, i.e., $\|u(\omega,\cdot)\|_{\infty} \le C$ for a.s. $\omega$ and all $u\in \mathcal{U}$ for some constant $C>0$.

In the next assumption, we present some regularity conditions, which will be necessary throughout the paper. Some of these conditions are needed to guarantee that the optimization problem is well-posed.

\begin{assumption}\label{regularity needed}
We assume the following:
\begin{itemize}
    \item $V$ and $S$ are such that
    $$
    P\left(\int_0^T\vert V(Z_s)\vert ds<\infty\right)=1 \quad\text{and}\quad P\left(\vert S(Z_T)\vert <\infty\right)=1;
    $$ 
    \item $V$ is a bounded Lipschitz map and $S$ is Lipschitz continuous.
\end{itemize}
\end{assumption}
For each $u\in\mathcal{U}$, we have
$$
\vert J_{t,x}(Z,u)\vert\leq \int_{0}^{ T} \left(u^2(s)/2 + \vert V(Z_s)\vert \right)ds+\vert S(Z_T)\vert.
$$
Thus, according to Assumption \ref{regularity needed}, the following result holds true.
\begin{prop}
Let $J_{t,x}$ be the random functional defined in \eqref{random functional} and $u\in{\cal U}$. Then,
$$
P\left(\vert J_{t,x}(Z,u)\vert<\infty\right)=1.
$$
\end{prop}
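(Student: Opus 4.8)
The plan is to combine the pointwise bound on $|J_{t,x}(Z,u)|$ displayed just before the statement with Assumption \ref{regularity needed} and the definition of the admissible class $\mathcal{U}$, showing that each of the three contributions on the right-hand side is finite for almost every $\omega$, and then intersecting the corresponding full-probability events.

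First I would control the kinetic term. Since $u\in\mathcal{U}$ is uniformly bounded in $L^\infty(0,T)$, there is a constant $C>0$ with $\|u(\omega,\cdot)\|_\infty\le C$ for almost every $\omega$, whence
$$
\int_0^T \frac{u^2(s)}{2}\,ds \le \frac{C^2 T}{2} < \infty
$$
for almost every $\omega$; in fact this bound is uniform in $\omega$, so the kinetic contribution causes no difficulty.

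Next I would treat the potential and terminal terms. The first item of Assumption \ref{regularity needed} provides directly that $\int_0^T |V(Z_s)|\,ds<\infty$ on a set of full probability, and likewise $|S(Z_T)|<\infty$ on a set of full probability; alternatively, since $V$ is assumed bounded, one even has the deterministic estimate $\int_0^T |V(Z_s)|\,ds \le T\,\|V\|_\infty$. Intersecting these two full-probability events with the event on which the kinetic bound holds yields a single event $\Omega_0$ with $P(\Omega_0)=1$ on which all three terms are simultaneously finite.

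Finally I would conclude by summing the three estimates: on $\Omega_0$ the displayed triangle-inequality bound gives $|J_{t,x}(Z,u)|<\infty$, which is exactly the assertion $P(|J_{t,x}(Z,u)|<\infty)=1$. I do not expect any genuine obstacle here, as the statement is a direct bookkeeping consequence of the hypotheses; the only point requiring care is to verify that the almost-sure events for the three terms are intersected correctly, which is immediate since a finite intersection of full-probability events again has probability one.
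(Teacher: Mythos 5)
Your proposal is correct and follows essentially the same route as the paper: the paper bounds $\vert J_{t,x}(Z,u)\vert$ by the same triangle-inequality estimate and then invokes Assumption \ref{regularity needed} together with the uniform boundedness of controls in $\mathcal{U}$, exactly as you do. The only difference is that you spell out the intersection of the full-probability events explicitly, which the paper leaves implicit.
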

\begin{lemma}
There exists a sequence $\{u_n(s)\}_{n \in \mathbb{N}} \subset \mathcal{U}$ s.t. the corresponding sequence $J_{t,x}(Z, u_n)$ is decreasing and the limit is $U_t(x)$ as $n$ goes to infinity for a.s. $\omega$.
\end{lemma}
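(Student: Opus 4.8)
The plan is to recognize this as the classical statement that the essential infimum of a family of random variables that is directed downward is attained as the almost-sure decreasing limit of a sequence drawn from that family. Thus the whole argument reduces to two pieces: (i) verifying that the family $\mathcal{J}=\{J_{t,x}(Z,u):u\in\mathcal{U}\}$ is \emph{directed downward}, i.e.\ for any $u_1,u_2\in\mathcal{U}$ there is $u_3\in\mathcal{U}$ with $J_{t,x}(Z,u_3)\le J_{t,x}(Z,u_1)\wedge J_{t,x}(Z,u_2)$ a.s., and (ii) running the standard extraction of a monotone minimizing sequence.

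For (i), given $u_1,u_2\in\mathcal{U}$ I would set $A=\{\omega:J_{t,x}(Z,u_1)(\omega)\le J_{t,x}(Z,u_2)(\omega)\}$, which is a measurable subset of $\Omega$, and define the pasted control
\[
u_3(\omega,s)=u_1(\omega,s)\mathbf{1}_A(\omega)+u_2(\omega,s)\mathbf{1}_{A^c}(\omega).
\]
The key observation is that $A$ is a time-independent event, so on $A$ the whole trajectory of the controlled diffusion \eqref{controlled diffusion} coincides with the one driven by $u_1$, and on $A^c$ with the one driven by $u_2$; consequently
\[
J_{t,x}(Z,u_3)=J_{t,x}(Z,u_1)\mathbf{1}_A+J_{t,x}(Z,u_2)\mathbf{1}_{A^c}=J_{t,x}(Z,u_1)\wedge J_{t,x}(Z,u_2)\quad\text{a.s.}
\]
Admissibility of $u_3$ is immediate: it is measurable and $\|u_3(\omega,\cdot)\|_\infty\le\max(\|u_1(\omega,\cdot)\|_\infty,\|u_2(\omega,\cdot)\|_\infty)\le C$ a.s., so $u_3\in\mathcal{U}$.

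For (ii), since $J_{t,x}(Z,u)$ need not be integrable I would compose with a bounded strictly increasing map $\phi$ (e.g.\ $\phi=\arctan$) so that $\phi(J_{t,x}(Z,u))$ is bounded. Set $\mu=\inf_{u\in\mathcal{U}}E[\phi(J_{t,x}(Z,u))]$ and pick $v_n$ with $E[\phi(J_{t,x}(Z,v_n))]\to\mu$. Using (i), I would take $u_1=v_1$ and inductively choose $u_{n+1}\in\mathcal{U}$ with $J_{t,x}(Z,u_{n+1})\le J_{t,x}(Z,u_n)\wedge J_{t,x}(Z,v_{n+1})$ a.s., so that $J_{t,x}(Z,u_n)$ is decreasing and dominated above by $J_{t,x}(Z,v_n)$. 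Let $J^\ast=\lim_n J_{t,x}(Z,u_n)$. By dominated convergence $E[\phi(J^\ast)]=\mu$, and $J^\ast\ge U_t(x)$ a.s.\ since each $J_{t,x}(Z,u_n)\ge U_t(x)$. To finish, fix any $u\in\mathcal{U}$ and apply (i) to $u_n$ and $u$: the resulting functionals have $\phi$-expectation bounded below by $\mu$, and passing to the limit with dominated convergence gives $\mu\le E[\phi(J^\ast\wedge J_{t,x}(Z,u))]\le E[\phi(J^\ast)]=\mu$; since $\phi(J^\ast\wedge J_{t,x}(Z,u))\le\phi(J^\ast)$ with equal expectation, they coincide a.s., whence $J^\ast\le J_{t,x}(Z,u)$. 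Thus $J^\ast$ is an a.s.\ lower bound for the family, so $J^\ast\le U_t(x)$, and therefore $J^\ast=U_t(x)$ a.s., which is the claim.

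The main obstacle is step (i), and specifically the verification that pasting two controls along the time-independent event $A$ reproduces exactly the pointwise minimum of the two action functionals. This is where the pathwise (non-adapted) nature of the problem actually helps: in a classical adapted formulation one would have to check that $A$ lies in the appropriate $\sigma$-algebra for $u_3$ to remain admissible, whereas here $\mathcal{U}$ only requires measurability and a uniform $L^\infty$ bound, both of which are preserved by the pasting. The remainder is the textbook essential-infimum extraction and is routine.
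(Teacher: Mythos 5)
Your proof is correct and follows essentially the same route as the paper: you establish that the family $\{J_{t,x}(Z,u)\}_{u\in\mathcal{U}}$ is directed downward via exactly the same pasting construction (the event $A=\{J(u_1)\le J(u_2)\}$ and the control $u_1\mathbf{1}_A+u_2\mathbf{1}_{A^c}$), and then extract a decreasing minimizing sequence. The only difference is that where the paper simply cites the standard essential-infimum property for downward-directed families (Neveu, p.~121), you prove that extraction lemma inline via the bounded transformation $\phi=\arctan$, which is the textbook argument and is carried out correctly.
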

\begin{proof}
    Let $u_1,u_2 \in \mathcal{U}$ and consider the event $A:=\{J(u_1)\le J(u_2)\}$. Define $\hat{u}(\omega,s):=u_1(\omega,s) \mathbbm{1}_A(\omega)+u_2(\omega,s)\mathbbm{1}_{A^c}(\omega)$, observe that
    \begin{align*}
        J(\hat{u})&= J(\hat{u})\mathbbm{1}_A+J(\hat{u})\mathbbm{1}_{A^c}\\
        &= J(u_1)\mathbbm{1}_A+J(u_2)\mathbbm{1}_{A^c}\\
        &=J(u_1) \wedge J(u_2), \qquad \text{a.s}.
    \end{align*}
    Thus the family $\{J_{t,x}(Z, u_n)\}_{n \in \mathbb{N}}$ is directed downwards and one uses the properties of essential infimum \cite[Pg 121]{Neveu} to guarantee an existence of a sequence $\{u_n(\omega,s)\}_{n \in \mathbb{N}} \subset \mathcal{U}$ such that essential infimum in \eqref{value process} becomes a limit, and for the corresponding sequence $J_{t,x}(Z,u_n)$ one can write $J_{t,x}(Z,u_n) \searrow U_t(x)$ a.s. as $n$ goes to infinity. 
    
\end{proof}
\begin{prop}\label{continuity}
The value function $U_t(x)$ defined in \eqref{value process} is continuous on $\mathbb{R}^n \times (0,T)$ a.s. and moreover, it is Lipschitz continuous in spatial variable, uniformly in $t$, and $\alpha$-H\"older continuous for $\alpha <1/2$ in time uniformly in $x$. 
\end{prop}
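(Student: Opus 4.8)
The plan is to derive two one-sided almost-sure estimates---one in the spatial variable, one in the time variable---directly from the additive structure of the state equation \eqref{controlled diffusion}, and then combine them with the a.s.\ Hölder regularity of Brownian paths and a density argument to produce a jointly continuous version of the field defined in \eqref{value process}.

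For the spatial estimate I fix $t$ and two initial points $x,x'$. The crucial observation is that, since in \eqref{controlled diffusion} the control enters additively and the noise is additive and control-independent, for a common control $u$ and a common Brownian path the two trajectories satisfy $Z^{t,x}_s-Z^{t,x'}_s=x-x'$ for every $s\in[t,T]$. Hence the kinetic terms in \eqref{random functional} cancel, and using that $V,S$ are Lipschitz (Assumption \ref{regularity needed}) I get, for every admissible $u$ and a.s.,
\[
|J_{t,x}(Z^{t,x},u)-J_{t,x'}(Z^{t,x'},u)|\le (L_V T + L_S)\,|x-x'|,
\]
a bound independent of $u$, $t$ and $\omega$. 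Rewriting this as $J_{t,x'}(Z^{t,x'},u)\le J_{t,x}(Z^{t,x},u)+(L_VT+L_S)|x-x'|$ and using that the essential infimum is monotone and commutes with the addition of a random variable not depending on the index, I pass to $\essinf_u$ on both sides to obtain $U_t(x')\le U_t(x)+(L_VT+L_S)|x-x'|$; symmetry yields the claimed spatial Lipschitz bound, uniform in $t$.

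For the time estimate I fix $x$ and $t<t'$, write $\delta=t'-t$ and $\xi=\nu^{1/2}(W(t')-W(t))$, and couple the two problems by constructing admissible \emph{bridging} controls. To bound $U_t(x)-U_{t'}(x)$ from above, given an (almost) optimal $v$ for the $t'$-problem I build a control for the $t$-problem equal to $0$ on $[t,t']$ and to $v$ afterwards; the resulting trajectory then equals the $t'$-trajectory shifted by $\xi$ on $[t',T]$, so the extra $[t,t']$-contribution is controlled by $\|V\|_\infty\,\delta$ and the mismatch on $[t',T]$ by $(L_VT+L_S)|\xi|$ through the Lipschitz bounds. To bound it from below, given a control $u$ for the $t$-problem I use its restriction to $[t',T]$ for the $t'$-problem; the trajectories then differ by the bounded displacement $\eta=Z^{t,x}_{t'}-x=\int_t^{t'}u\,ds+\xi$, with $|\eta|\le C\delta+|\xi|$ by the uniform bound $\|u\|_\infty\le C$, while the discarded $[t,t']$-integral is bounded by $(\tfrac{C^2}{2}+\|V\|_\infty)\delta$. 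Passing to $\essinf$ exactly as before (the remainders being dominated by quantities independent of the control) yields, a.s.\ and uniformly in $x$,
\[
|U_t(x)-U_{t'}(x)|\le C_1\,|t'-t| + C_2\,|W(t')-W(t)|.
\]

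It then remains to upgrade these pointwise-a.s.\ estimates into a single almost sure statement of joint continuity. Restricting $(t,x)$ to a countable dense set and intersecting the corresponding null sets, I obtain a full-measure event on which $U$ is Lipschitz in space and obeys the time bound above along the dense set; since $W$ is a.s.\ uniformly continuous on $[0,T]$ and $\alpha$-Hölder for every $\alpha<1/2$ (Kolmogorov/Lévy), the combined modulus of continuity is uniform, so $U$ extends to a continuous field on $(0,T)\times\mathbb{R}^n$ that agrees a.s.\ with the value process at each fixed point. Absorbing the linear-in-$\delta$ term into $|t'-t|^\alpha$ on the bounded interval gives the $\alpha$-Hölder continuity in time for every $\alpha<1/2$. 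I expect the main obstacle to be the time estimate: unlike the spatial case, the trajectories cannot be made to differ by a deterministic constant, so one must construct admissible bridging controls and carefully justify the two passages to the essential infimum, with the identification of the continuous version on a single null set as the remaining delicate point.
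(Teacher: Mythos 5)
Your proof is correct, and its core is the same as the paper's: since noise and control enter \eqref{controlled diffusion} additively, two trajectories driven by a common control and a common Brownian path differ by the constant $x-x'$ in the spatial comparison, and by $\int_t^{t'}u\,ds+\sqrt{\nu}\,(W(t')-W(t))$ in the time comparison; the Lipschitz bounds on $V$ and $S$, the uniform bound on the controls, and the a.s.\ H\"older regularity of $W$ then give the stated moduli. Where you genuinely depart from the paper is in how these estimates on $J$ are transferred to the value process. The paper fixes one minimizing sequence $u_n$ (from its Lemma 3.1) and writes $U_t(x)-U_t(x')$, and likewise $U_t(x)-U_{t'}(x)$, as the limit of the corresponding $J$-differences along that single sequence; this tacitly assumes that a sequence minimizing for one problem is also minimizing for the other, which is not justified as written (it can be repaired by replacing the equalities with two one-sided inequalities). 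You avoid the issue altogether by using only one-sided bounds: monotonicity of the essential infimum in the spatial case, and explicit bridging controls (zero control on $[t,t']$ in one direction, restriction of the control in the other) in the time case. You also add a step the paper omits entirely: the estimates hold a priori outside a null set depending on the pair of points compared, and you consolidate them over a countable dense set to obtain a single full-measure event carrying a jointly continuous version. Both refinements cost little and make the argument airtight; the paper's proof is shorter but silently relies on exactly these two points.
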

\begin{proof}
We denote by $Z_s^{t,x,u}$, the solution of \eqref{controlled diffusion}
at time $s$. We first  show the continuity of the value function $U_t(x)$ in spatial variable $x$. We start by picking a sequence $\{u_n(\cdot)\}_{n\in \mathbb{N}}$ 
\color{black}
of admissible controls in $\mathcal{U}$ such that $U_t(x)=\lim\limits_{n\to \infty}J_{t,x}(Z,u_n)$ a.s., i.e.,
\begin{align}\label{inf}
U_t(x)=\lim_{n \to \infty}\int_t^T\left( \frac{u^2_n(s)}{2} + V\left(Z_s^{t,x,u_n(s)}\right) \right)ds+S\left(Z_T^{t,x,u_n(t)}\right).
\end{align}
Consider
\begin{align*}
U_t(x)-U_t(x')
&= \lim_{n \to \infty}\left[\int_t^T\left(V\left(Z_s^{t,x,u_n(s)}\right)-V\left(Z_s^{t,x',u_n(s)} \right)\right)ds\right.\\
&\qquad+\left. S\left(Z_T^{t,x,u_n(t)}\right)-S\left(Z_T^{t,x',u_n(t)}\right)\right].
\end{align*}
Thus,
\begin{align*}
    |U_t(x)-U_t(x')| &\le \lim\limits_{n \to \infty}\left[\int_t^T\left\vert V\left(Z_s^{t,x,u_n(s)}\right)-V\left(Z_s^{t,x',u_n(s)}\right)\right\vert  \,ds \right.\\
 &\quad +\left. \left\vert S\left(Z_T^{t,x,u_n(t)}\right)-S\left(Z_T^{t,x',u_n(t)}\right)\right\vert \right]\\
 &\le \lim\limits_{n \to \infty}\left[\|V'\|_\infty\int_t^T\left\vert Z_s^{t,x,u_n(s)}-Z_s^{t,x',u_n(s)}\right\vert  \,ds \right.\\
 &\quad +\left. \|S'\|_\infty\left\vert Z_T^{t,x,u_n(t)}- Z_T^{t,x',u_n(t)}\right\vert \right].
\end{align*}
Next, we notice from \eqref{controlled diffusion} that $\left\vert Z_s^{t,x,u_n(t)}-Z_s^{t,x',u_n(t)}\right\vert= \vert x-x'\vert$ almost surely, which allows us to conclude that

\begin{align}\label{space}
\vert U_t(x)-U_t(x')\vert\leq C|x-x'|.
\end{align}
Now we prove that $U_t(x)$ is continuous in time variable $t$; we have 
\begin{align}\label{inf-t}
U_{t}(x) &= \lim_{n \to \infty}\left[\int_{t}^{T}\left(\frac{u^2_n(s)}{2} + V(Z_s^{t,x,u_n(s)})\right) ds +S\left(Z_T^{t,x,u_n(t)}\right)\right].
\end{align}
Take $t'>t$ and consider
\begin{align*}
U_{t}(x)&-U_{t'}(x)\\
&= \lim_{n \to \infty}\left[\int_{t'}^{T}\left(V\left(Z_s^{t,x,u_n(s)}\right)-V\left(Z_s^{t',x,u_n(s)}\right) \right)ds+S\left(Z_T^{t,x,u_n(t)}\right)-S\left(Z_T^{t',x,u_n(t)}\right)\right]\\
&~~+\lim_{n \to \infty}\left[\int_{t}^{t'}\left( \frac{u^2_n(s)}{2} + V\left(Z_s^{t,x,u_n(s)}\right) \right)ds \right]. 
\end{align*}

Then, 
\begin{align*}
    |U_t(x)-U_t'(x)|&\le \lim_{n \to \infty}\left[\|V'\|_\infty\int_{t'}^{T}\left\vert Z_s^{t,x,u_n(s)}-Z_s^{t',x,u_n(s)}\right\vert ds\right.\\
&~~+\left. \|S'\|_\infty\left\vert Z_T^{t,x,u_n(t)}- Z_T^{t',x,u_n(t)}\right\vert + C(\|u_n\|^2_\infty+\|V\|_\infty)|t-t'|\right]
\end{align*}
We again notice from \eqref{controlled diffusion} that
\begin{align*}
    &\left\vert Z_s^{t,x,u_n(t)}- Z_s^{t',x,u_n(t)}\right\vert =\left \vert \int^t_{t'}u_n(s)\,ds
    + \int^t_{t'}\sqrt{\nu}\,dW_s \right \vert\\
    &\qquad\le \|u_n\|_\infty|t-t'|+\sqrt{\nu}|W(t)-W(t')|\\
    &\qquad \le C(\sqrt{\nu},W)|t-t'|^\alpha 
\end{align*}
Thus, we get
\begin{align}\label{time}
|U_t(x)- U_{t'}(x)| \le C|t-t'|^\alpha. 
\end{align}
Finally the continuity of $U_t(x)$ in $x$ from \eqref{space} and in $t$ from \eqref{time} implies the joint continuity in $(x,t)$ a.s. as  we have 
\begin{align*}
    |U_t(x)-U_{t'}(x')| & \le |U_t(x)-U_t(x')|+|U_t(x')-U_{t'}(x')|\\ 
    &\le C(|x-x'|+|t-t'|^\alpha).
\end{align*}
\end{proof}

\section{Stochastic HJB equation}\label{Stochastic HJB equation}
In classical stochastic control problems under a Markovian framework, the value function can be represented as a viscosity solution of an HJB equation, which is a deterministic PDE (see for instance \cite{fleming2006controlled}). In our case, using  Bellman's principle, we can prove that the value process is a stochastic viscosity solution to the terminal valued SPDE
\begin{align}\label{Stratonovich}
\begin{cases}
     dv(s,x)= -\sqrt{\nu}\nabla v(s,x)\circ dW(s)+\left( V(x)-\frac{|\nabla v|^2}{2}\right)\mathrm{d}s, \quad &\text{in } (0,T)\times \mathbb{R}^n,\\
    v(T,x)=S(x), & \text{on } \mathbb{R}^n.
    \end{cases}
\end{align}
Taking into account the definition of stochastic viscosity solutions and the simpler form of Stratonovich expressions 
it is more convenient to write the  SPDE in the Stratonovich sense as above. In It\^o form it reads
\begin{align}\label{SPDE value process}
\begin{cases}
     dv(s,x)= -\sqrt{\nu}\nabla v(s,x).dW(s)\\ \hspace{3cm}+\left(V(x)-\frac{|\nabla v|^2}{2}+\frac{\nu}{2}
     {\mathbb D}_s (\nabla v)(s,x)\right)\mathrm{d}s, \quad &\text{in } (0,T)\times \mathbb{R}^n, \\
    v(T,x)=S(x), & \text{on } \mathbb{R}^n.
    \end{cases}
\end{align}
Let us consider the following equation
 \begin{align}\label{stochastic Hamilton-Jacobi equation}
 \begin{cases}
      d\Phi(s,x)= -\sqrt{\nu}\nabla \Phi(s,x)\circ dW(s) , \quad &\text{in } (0,T)\times \mathbb{R}^n,\\
     \Phi(T,x)=\phi(x), & \text{on } \mathbb{R}^n,
     \end{cases}
 \end{align}

\begin{lemma}\label{solution}
Let $\phi\in C^2_b(\mathbb{R}^n)$, then $\Phi(s,x):=\phi(x +\sqrt{\nu} (W_T-W_{s}))$ is a classical solution to \eqref{stochastic Hamilton-Jacobi equation}, for all $s\leq T$.
\end{lemma}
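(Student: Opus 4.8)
The plan is to recognise $\Phi(s,x)$ as the composition of the fixed function $\phi\in C^2_b(\mathbb{R}^n)$ with the anticipating one-parameter family $r\mapsto\Psi_r:=x+\sqrt{\nu}(W_T-W_r)$, and to differentiate in $r$ by means of the Stratonovich calculus of Section~\ref{non-adapted}, which obeys the ordinary chain rule even for non-adapted integrands. First I would dispose of the terminal condition, which is immediate: at $s=T$ one has $W_T-W_T=0$, hence $\Phi(T,x)=\phi(x)$. The regularity needed to speak of a \emph{classical} solution is inherited from $\phi$: because the shift $\sqrt{\nu}(W_T-W_s)$ does not depend on $x$, the field $x\mapsto\Phi(s,x)$ is $C^2_b$ for each $s$ almost surely, with $\nabla\Phi(s,x)=(\nabla\phi)(x+\sqrt{\nu}(W_T-W_s))$ and bounded Hessian. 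Note in particular that the spatial gradient of $\Phi$ coincides with $\nabla\phi$ evaluated at the shifted point, which is precisely the coefficient that will appear below.

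For the equation itself, fix $x$ and write $\Psi_r=x+\sqrt{\nu}W_T-\sqrt{\nu}W_r$. The only $r$-dependence is carried by the term $-\sqrt{\nu}W_r$, the anticipating piece $\sqrt{\nu}W_T$ being frozen in $r$, so that $\Psi$ is a process with diffusion coefficient $-\sqrt{\nu}$ and no drift, i.e.\ $d\Psi_r=-\sqrt{\nu}\circ dW_r$. Applying the It\^o--Wentzell formula \eqref{Ito-Wentzell formula} to $\Phi(r,x)=\phi(\Psi_r)$ on the interval $[s,T]$ --- where the random field is the deterministic $\phi$, carrying no $\circ\,dW$ or $dr$ terms of its own, and $Z$ is replaced by $\Psi$ with $B\equiv-\sqrt{\nu}$ and $A\equiv 0$ --- gives $d\Phi(r,x)=\nabla\phi(\Psi_r)\cdot d\Psi_r=-\sqrt{\nu}\,\nabla\phi(\Psi_r)\circ dW_r=-\sqrt{\nu}\,\nabla\Phi(r,x)\circ dW_r$. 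Integrating from $s$ to $T$ and using $\Phi(T,x)=\phi(x)$ reproduces \eqref{stochastic Hamilton-Jacobi equation} in integrated Stratonovich form, for every $s\le T$, which is the assertion.

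The one delicate point, and the main obstacle, is to legitimise the ordinary chain rule for the anticipating integrand $\nabla\Phi(r,x)=(\nabla\phi)(x+\sqrt{\nu}(W_T-W_r))$, which is not adapted to the forward filtration, and to ensure that the Stratonovich--Skorohod integral $\int_s^T\nabla\Phi(r,x)\circ dW_r$ is well defined. I would verify the hypotheses of Ocone's It\^o--Wentzell theorem \cite{Ocone} in this setting: from $D_\tau\Psi_r=\sqrt{\nu}\,\mathbbm{1}_{\{\tau\ge r\}}$ (componentwise) one obtains $D_\tau(\nabla\phi(\Psi_r))=\sqrt{\nu}\,\mathbbm{1}_{\{\tau\ge r\}}\,(\nabla^2\phi)(\Psi_r)$, which is bounded since $\phi\in C^2_b$; in particular the nonvanishing of this derivative for $\tau\ge r$ quantifies the non-adaptedness, while the boundedness places the integrand in the domain of the non-adapted integral and secures the required regularity of the coefficients. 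With this in hand the identity above is a direct substitution. As a consistency check I would also pass to the It\^o--Skorohod form via $\int u\circ dW=\int u\,dW+\tfrac12\int(\mathbb{D}u)_s\,ds$ and the commutation relation $D_s\!\int u\,dW=\int D_s u_\tau\,dW_\tau+u_s$; taking $u=-\sqrt{\nu}\,\nabla\Phi$, the contraction $(\mathbb{D}u)_s$ is nonzero, reflecting the non-adapted It\^o correction, and one recovers the corresponding drift-corrected It\^o form of \eqref{stochastic Hamilton-Jacobi equation}.
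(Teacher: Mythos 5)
Your proposal is correct. The paper actually states Lemma \ref{solution} without any proof (the later sentence invoking ``the proof of Proposition \ref{solution}'' refers to an argument that never appears in the text), so there is nothing to compare against; your verification --- terminal condition by inspection, then the anticipating chain rule / It\^o--Wentzell formula \eqref{Ito-Wentzell formula} applied to $\phi(\Psi_r)$ with $d\Psi_r=-\sqrt{\nu}\,dW_r$ and anticipating initial value, with applicability secured by the bound $D_\tau(\nabla\phi(\Psi_r))=\sqrt{\nu}\,\mathbbm{1}_{\{\tau\ge r\}}(\nabla^2\phi)(\Psi_r)$ coming from $\phi\in C^2_b$ --- is precisely the computation the authors evidently intend, and your closing consistency check (the nonzero contraction $(\mathbb{D}u)_s$ producing the corrected It\^o drift) matches the It\^o form \eqref{SPDE value process} given in the paper.
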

We recall the definition of pathwise viscosity solution for the following first order initial value problem
\begin{align}\label{HJB}
\begin{cases}
     du(s,x)= H(Du)\circ dW(s)+ F(Du,x)\,ds,\quad &\text{in } (0,T)\times \mathbb{R}^n,\\
    u(T,x)=f(x), & \text{on } \mathbb{R}^n.
    \end{cases}
\end{align}
where $F: \mathbb{R}^n \times \mathbb{R}^n \to \mathbb{R},\, H:\mathbb{R}^n \to \mathbb{R}^n$ and $f:\mathbb{R}^n \to \mathbb{R}$ are given functions and we consider the Stratonovich integral in the generalized non-adapted sense. Let $\Phi$ denote the solution of corresponding stochastic Hamilton-Jacobi equation, i.e., $d \Phi = H(D \Phi) \circ dW(s)$ in $(0,T) \times \mathbb{R}^n$ with $\Phi(T,x)=f(x)$ on $\mathbb{R}^n$.
\begin{defi}\label{viscosity}
An upper semi-continuous (resp. a lower semi-continuous) function $v$ defined on $[0,T] \times \mathbb{R}^n$ is said to be a viscosity sub-solution (resp. super-solution) to \eqref{HJB} if it is bounded from above (resp. from below) with terminal data satisfying $u(\cdot, T) \le f(x)$ (resp. $u(\cdot,T) \ge f(x)$), and, whenever $\phi \in C^2_b(\mathbb{R}^n)$, $h=h(\phi)>0$, $g\in C^1([0,T]), \Phi(s,x)\in C^2_b(\mathbb{R}^n)$,  for $s\in (s_0-h,s_0+h)$, and the map $(s,x)\mapsto v(s,x)-\Phi(s,x)-g(s)$ attains a local maximum (resp. local minimum) at $(s_0,x_0) \in \mathbb{R}^n \times (s_0-h,s_0+h)$, then 
\begin{align}\label{condition}
    -g'(s_0) \le F(D \Phi(s_0,x_0),x_0) \quad \left(\text{resp. }-g'(s_0) \ge F(D \Phi(s_0,x_0),x_0)\right). 
\end{align} 
\end{defi}

\begin{rem}\cite[Lemma 3.4]{seeger2018perron}
Suppose a function $v$ satisfies the hypothesis of Definition \ref{viscosity} such that \eqref{condition} is true when we replace local maximum (resp. local minimum) with local strict maximum (resp. local strict minimum), then $v$ is a viscosity sub-solution (resp. super-solution) in the sense of Definition \ref{viscosity}.
\end{rem}
In order to establish that the value function is a solution to the stochastic HJB in the viscosity sense, we next show that the value function satisfies the following pathwise Dynamic Programming Principle.
\begin{prop}[Bellman's Optimality Principle]\label{prop Bellman principle}
Let $\delta$ be a constant such that $t+\delta \le T$ and $\tilde{U}_t$ be such that
$$
\tilde{U}_t(x)=\essinf_{u\in\mathcal{U}}\left\{\int_{t}^{t+\delta} \left(\frac{u^2(s)}{2} + V(Z_s) \right)ds\right\}+U_{t+\delta}(Z_{t+\delta}).
$$
Then, $\tilde{U}_t={U}_t$.
\end{prop}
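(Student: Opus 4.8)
The plan is to prove the two inequalities $U_t(x) \ge \tilde U_t(x)$ and $U_t(x) \le \tilde U_t(x)$ separately; together they give the asserted identity for almost every $\omega$. Both rest on the additive decomposition of the functional \eqref{random functional} along the flow of the controlled diffusion \eqref{controlled diffusion}. Writing $Z^{t,x,u}$ for the solution started at $(t,x)$ under a control $u\in\mathcal{U}$, pathwise uniqueness of \eqref{controlled diffusion} yields the semigroup identity $Z^{t,x,u}_s = Z^{t+\delta,\,Z_{t+\delta},\,u}_s$ for $s\ge t+\delta$, and hence
\begin{align*}
J_{t,x}(Z,u) = \int_t^{t+\delta}\left(\frac{u^2(s)}{2}+V(Z_s)\right)ds + J_{t+\delta,\,Z_{t+\delta}}(Z,u)\qquad\text{a.s.}
\end{align*}

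For the inequality $U_t(x)\ge\tilde U_t(x)$, I would fix an arbitrary $u\in\mathcal{U}$. Its restriction to $[t+\delta,T]$ is admissible for the problem started at $(t+\delta,Z_{t+\delta})$, so by definition of the value process $J_{t+\delta,\,Z_{t+\delta}}(Z,u)\ge U_{t+\delta}(Z_{t+\delta})$ a.s. Combined with the decomposition this gives $J_{t,x}(Z,u)\ge\tilde U_t(x)$ a.s.; since this holds for every $u\in\mathcal{U}$ and the essential infimum is the greatest a.s.\ lower bound of the family, $U_t(x)\ge\tilde U_t(x)$ follows at once.

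The reverse inequality $U_t(x)\le\tilde U_t(x)$ is the delicate direction. Here I would fix $\varepsilon>0$ and an arbitrary control $u^1\in\mathcal{U}$ on $[t,t+\delta]$, and splice onto it a near-optimal tail. The obstruction is that the endpoint $Z_{t+\delta}$ is random, so the tail control must be chosen as a measurable function of the starting point: using the Lipschitz continuity of $y\mapsto U_{t+\delta}(y)$ from Proposition \ref{continuity} together with the essential-infimum machinery, I would produce for each $y$ an $\varepsilon$-optimal control $u^{2,y}\in\mathcal{U}$ with $J_{t+\delta,\,y}(Z,u^{2,y})\le U_{t+\delta}(y)+\varepsilon$, depending measurably on $y$, and then evaluate at $y=Z_{t+\delta}$. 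The concatenation $\hat u=u^1\mathbbm{1}_{[t,t+\delta)}+u^{2,Z_{t+\delta}}\mathbbm{1}_{[t+\delta,T]}$ still belongs to $\mathcal{U}$, since $\mathcal{U}$ only requires measurability and a uniform $L^\infty$ bound and imposes no adaptedness. The decomposition then yields
\begin{align*}
U_t(x)\le J_{t,x}(Z,\hat u)\le\int_t^{t+\delta}\left(\frac{(u^1)^2(s)}{2}+V(Z_s)\right)ds+U_{t+\delta}(Z_{t+\delta})+\varepsilon;
\end{align*}
taking the essential infimum over $u^1$ and letting $\varepsilon\downarrow0$ gives $U_t(x)\le\tilde U_t(x)$.

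I expect the measurable-selection step in the second inequality to be the main obstacle. In the non-adapted, pathwise framework one cannot invoke the usual adapted conditioning, so I would carry out the selection $\omega$-by-$\omega$, exploiting that both the continuity of the value function (Proposition \ref{continuity}) and the flow property of $Z$ hold pathwise, and that $\mathcal{U}$ is stable under gluing of measurable bounded controls; this is also what justifies substituting the random endpoint $Z_{t+\delta}$ into the relations defining $U_{t+\delta}(\cdot)$. A secondary point is the compatibility of the essential infima at the two time levels $t$ and $t+\delta$: one must check that the minimizing sequences available at each level can be combined while preserving the a.s.\ inequalities, which follows from the directed-downwards structure recorded in the minimizing-sequence lemma above.
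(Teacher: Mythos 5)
Your proof of the easy inequality $U_t\ge\tilde U_t$ is the same as the paper's (decompose $J_{t,x}$ at $t+\delta$, bound the tail by $U_{t+\delta}(Z_{t+\delta})$, take the essential infimum). The divergence is in the hard direction, where the paper's argument is genuinely different and simpler: it never parametrizes controls by deterministic starting points. Instead, it fixes an arbitrary $u$ on $[t,t+\delta]$, takes the minimizing sequence $\{u_n\}$ furnished by the directed-downwards (essential infimum) lemma \emph{directly for the random starting point} $Z_{t+\delta}$, splices the two into $\tilde u\in\mathcal{U}$, uses the pathwise identity $\int_t^{t+\delta}\left(\tfrac{u^2}{2}+V(Z_s)\right)ds+J_{t+\delta,Z_{t+\delta}}(Z,u_n)=J_{t,x}(Z,\tilde u)\ge U_t(x)$, and lets $n\to\infty$ before taking the essential infimum over $u$. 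This is exactly what the non-adapted class $\mathcal{U}$ buys: a tail control is allowed to depend on $Z_{t+\delta}$ (indeed on the whole path), so no selection over initial points is ever required.

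Your route --- an $\varepsilon$-optimal control $u^{2,y}$ for each deterministic $y$, ``depending measurably on $y$'', then substitution $y=Z_{t+\delta}$ --- is the classical adapted-control proof transplanted to this setting, and its weak point is precisely the step you flag: you assert, but do not construct, a measurable selection over a continuum of starting points, and evaluating a family of a.s.\ inequalities (one null set per $y$) at a random point is not legitimate without further structure. Both issues are repairable by the standard countable discretization, which your own appeal to Proposition \ref{continuity} already supports: cover $\mathbb{R}^n$ by balls $B_j$ of radius $r$ with centers $y_j$; for each $j$ obtain a single a.s.\ $\varepsilon$-optimal control $u^{2,y_j}$ by gluing the minimizing sequence for $y_j$ along the events where it has dropped below $U_{t+\delta}(y_j)+\varepsilon$ (the same device that proves directedness, valid since countable gluings of elements of $\mathcal{U}$ stay in $\mathcal{U}$); define the tail control piecewise on the events $\{Z_{t+\delta}\in B_j\setminus\cup_{i<j}B_i\}$; and absorb the $O(r)$ error using the Lipschitz continuity of both $y\mapsto U_{t+\delta}(y)$ and $y\mapsto J_{t+\delta,y}(Z,u)$ (the latter because $Z^{t+\delta,y,u}_s-Z^{t+\delta,y',u}_s=y-y'$ and $V$, $S$ are Lipschitz). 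Only countably many null sets then occur. Once repaired, your argument is sound, and it has one merit over the paper's: it proves explicitly the identification of $U_{t+\delta}(y)\big|_{y=Z_{t+\delta}}$ with $\essinf_{u\in\mathcal{U}}J_{t+\delta,Z_{t+\delta}}(Z,u)$, which the paper uses tacitly when it asserts that $J_{t+\delta,Z_{t+\delta}}(Z,u_n)$ decreases to $U_{t+\delta}(Z_{t+\delta})$.
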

\begin{proof}
We observe that
\begin{align*}
	J_{t,x}(Z,u )&=\int_t^T \left( \frac{u^2(s)}{2} + V(Z_s) \right)ds+S(Z_T)\\
&=\int_{t}^{t+\delta} \left( \frac{u^2(s)}{2} + V(Z_s) \right)ds+\underbrace{\int^{T}_{t+\delta}\left( \frac{u^2(s)}{2} + V(Z_s) \right)ds+S(Z_T)}_{\coloneqq J_{t+\delta,Z_{t+\delta}}(Z,u)}\\
	&\geq \int_t^{ t+\delta} \left(\frac {u^2(s)}{2} + V(Z_s) \right)ds+	U_{t+\delta}(Z_{t+\delta}).
\end{align*}
Since this is true for any $u\in\mathcal{U}$, we get 
\begin{align}\label{BP-one}
{U}_t(x)\geq \essinf_{u\in\mathcal{U}}\left\{\int^{t+\delta}_t \left(\frac{ u^2(s)}{2} + V(Z_s) \right)ds\right\}+U_{t+\delta}(Z_{t+\delta}).
\end{align}
Let $\{u_n(s)\}_n$ be the sequence as in Prop \ref{continuity} and define
\begin{align*}
\tilde{u}(s)&=\begin{cases}
u(s),&0\leq s \leq t+\delta\\
u_n(s),&t+\delta\leq s\leq T.
\end{cases}
\end{align*}
Then we have  
\begin{align*}
    \int^{t+\delta}_t \left(\frac{u^2(s)}{2} + V(Z_s) \right)ds+J_{t+\delta,x}(Z_{t+\delta},u_n)&=\int_t^T \left(\frac{\widetilde{u}^2(s)}{2} + V(Z^{\widetilde u}_s) \right)ds+S(Z_T)\\
    &\geq {U}_t(x),
\end{align*}
for all $\tilde{u}\in \mathcal{U}$, now first taking limit as $n$ goes to infinity and then taking infinimum over the family $\mathcal{U}$ gives 
\begin{align}\label{BP-two}
\essinf_{u\in\mathcal{U}}\left\{\int^{t+\delta}_t \left(\frac{ u^2(s)}{2} + V(Z_s) \right)ds\right\}+U_{t+\delta}(Z_{t+\delta})\geq U_t(x).
\end{align}
Combining \eqref{BP-one} and \eqref{BP-two} gives the desired result.
\end{proof}

\begin{prop}[Existence of viscosity solution]
The value process $U$ defined by \eqref{value process}  is a viscosity solution to the terminal value SPDE \eqref{Stratonovich}, in the sense of Definition \ref{viscosity}.
\end{prop}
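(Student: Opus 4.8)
The plan is to verify separately the subsolution and supersolution inequalities of Definition \ref{viscosity}, in each case coupling Bellman's optimality principle (Proposition \ref{prop Bellman principle}) with the It\^o--Wentzell formula \eqref{Ito-Wentzell formula} applied to the stochastic test field. The terminal condition $U_T(x)=S(x)$ is immediate from \eqref{value process}, since $Z_T=x$ when $t=T$, and the semicontinuity demanded of a sub/supersolution is supplied by the joint continuity of $U$ from Proposition \ref{continuity}; if convenient one reduces to strict local extrema via the remark following Definition \ref{viscosity}. The conceptual heart is the following cancellation. Given $\phi\in C^2_b(\mathbb{R}^n)$, let $\Phi(s,x)=\phi(x+\sqrt{\nu}(W_T-W_s))$ be the solution of \eqref{stochastic Hamilton-Jacobi equation} furnished by Lemma \ref{solution}, so that $d\Phi=-\sqrt{\nu}\nabla\Phi\circ dW$ with no $ds$-term. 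If $Z$ solves \eqref{controlled diffusion} on $[s_0,s_0+\delta]$ with $Z_{s_0}=x_0$ and any control $u\in\mathcal{U}$, then writing $Z_s=Z_{s_0}+\int_{s_0}^s u_\tau\,d\tau+\int_{s_0}^s\sqrt{\nu}\circ dW_\tau$ and applying \eqref{Ito-Wentzell formula} to $\Phi(s,Z_s)$, the Stratonovich integrals coming from the spatial transport of $Z$ and from the evolution of $\Phi$ cancel exactly, leaving
\begin{align*}
\Phi(s_0+\delta,Z_{s_0+\delta})-\Phi(s_0,x_0)=\int_{s_0}^{s_0+\delta}\nabla\Phi(\tau,Z_\tau)\cdot u_\tau\,d\tau.
\end{align*}
Composed with the controlled diffusion carrying the same noise, the test field is thus absolutely continuous in time, which is precisely what reduces the verification to a deterministic-type computation.

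For the subsolution property, suppose $(s,x)\mapsto U(s,x)-\Phi(s,x)-g(s)$ has a local maximum at $(s_0,x_0)$. For an arbitrary constant control $u\equiv a\in\mathbb{R}^n$ the essential-infimum definition together with Proposition \ref{prop Bellman principle} gives $U_{s_0}(x_0)\le\int_{s_0}^{s_0+\delta}(|a|^2/2+V(Z_s))\,ds+U_{s_0+\delta}(Z_{s_0+\delta})$. Inserting the local-maximum inequality $U_{s_0+\delta}(Z_{s_0+\delta})\le\Phi(s_0+\delta,Z_{s_0+\delta})+g(s_0+\delta)+[U(s_0,x_0)-\Phi(s_0,x_0)-g(s_0)]$ and the cancellation identity, then dividing by $\delta$ and letting $\delta\downarrow0$ (using a.s. continuity of $s\mapsto W_s$, of $V$ and of $\nabla\Phi$), yields $-g'(s_0)\le|a|^2/2+\nabla\Phi(s_0,x_0)\cdot a+V(x_0)$. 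Since $a$ is arbitrary I minimise the right-hand side over $a$, the minimiser $a=-\nabla\Phi(s_0,x_0)$ producing exactly $-g'(s_0)\le V(x_0)-|\nabla\Phi(s_0,x_0)|^2/2=F(D\Phi(s_0,x_0),x_0)$.

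For the supersolution property, suppose instead a local minimum at $(s_0,x_0)$, and use near-optimal controls: by Proposition \ref{prop Bellman principle} and the directed-downward structure of $\{J_{t,x}(Z,u)\}$ established above, for each $\delta$ there is $u^\delta\in\mathcal{U}$ with $\int_{s_0}^{s_0+\delta}(|u^\delta_s|^2/2+V(Z^\delta_s))\,ds+U_{s_0+\delta}(Z^\delta_{s_0+\delta})\le U_{s_0}(x_0)+\delta^2$. Combining the local-minimum inequality with the cancellation identity, the integrand obeys $|u^\delta_s|^2/2+\nabla\Phi(s,Z^\delta_s)\cdot u^\delta_s+V(Z^\delta_s)\ge F(\nabla\Phi(s,Z^\delta_s),Z^\delta_s)$, whence $-(g(s_0+\delta)-g(s_0))+\delta^2\ge\int_{s_0}^{s_0+\delta}F(\nabla\Phi(s,Z^\delta_s),Z^\delta_s)\,ds$. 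Dividing by $\delta$ and letting $\delta\downarrow0$ is designed to give $-g'(s_0)\ge F(D\Phi(s_0,x_0),x_0)$.

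The hard part is exactly this last passage to the limit in the supersolution case: the state $Z^\delta$ depends on the $\delta$-dependent near-optimal control, so to conclude $\tfrac1\delta\int_{s_0}^{s_0+\delta}F(\nabla\Phi(s,Z^\delta_s),Z^\delta_s)\,ds\to F(D\Phi(s_0,x_0),x_0)$ I must control $Z^\delta$ uniformly over the choice of control. This is where the definition of $\mathcal{U}$ as uniformly $L^\infty$-bounded controls is essential: it yields $\sup_{s\in[s_0,s_0+\delta]}|Z^\delta_s-x_0|\le C\delta+\sqrt{\nu}\sup_{|s-s_0|\le\delta}|W_s-W_{s_0}|\to0$ a.s., uniformly in the admissible control, so that continuity of $F$ and of $\nabla\Phi$ forces the uniform convergence of the integrand. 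A secondary point is that the hypotheses of \eqref{Ito-Wentzell formula} are met because $\Phi(s,\cdot)\in C^2_b$ and the drift $u$ is bounded, as already noted after \eqref{Ito-Wentzell formula}.
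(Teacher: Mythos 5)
Your proposal is correct and rests on the same two pillars as the paper's proof: Bellman's optimality principle (Proposition \ref{prop Bellman principle}) and the exact cancellation of the Stratonovich terms when the It\^o--Wentzell formula \eqref{Ito-Wentzell formula} is applied to $\Phi(s,Z_s)$, with $\Phi$ the test field solving \eqref{stochastic Hamilton-Jacobi equation}. Where you diverge is in how the two viscosity inequalities are extracted. The paper writes out only the supersolution case, localizing with exit times $\theta_n$ of the controlled diffusion from a space-time box around a sequence $(t_n,x_n)\to(t_0,x_0)$, keeping the essential infimum throughout, and passing to the limit inside it (an interchange it does not justify in detail), then declares the subsolution case ``analogous''. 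You instead split the argument in the standard deterministic-control fashion: for the subsolution you plug in arbitrary constant controls and minimise over $a$ at the very end, and for the supersolution you use a.s.\ $\delta^2$-optimal controls together with the pointwise Legendre-type bound $\frac{|u|^2}{2}+\nabla\Phi\cdot u+V\ge V-\frac{|\nabla\Phi|^2}{2}$, which neatly sidesteps the limit--essinf interchange; your explicit remark that the uniform $L^\infty$ bound on $\mathcal{U}$ makes $Z^\delta\to x_0$ uniformly over admissible controls plays the role of the paper's stopping times. On balance your treatment of the supersolution limit is the more careful of the two.

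One step you should justify more fully: the existence, for each $\delta$, of a \emph{single} control $u^\delta\in\mathcal{U}$ satisfying the near-optimality inequality almost surely. The directed-downward lemma only gives a sequence $u_n$ with $J(u_n)\searrow U$ a.s., and the rate of that convergence is $\omega$-dependent, so no fixed $u_n$ need be $\delta^2$-optimal on a full-measure set. The fix is a measurable pasting: set $N(\omega)=\min\{n: J(u_n)(\omega)\le U(\omega)+\delta^2\}$ (finite a.s.) and define $u^\delta(\omega,\cdot)=u_{N(\omega)}(\omega,\cdot)$; since $J$ and the state equation \eqref{controlled diffusion} are pathwise in $(\omega,u(\omega,\cdot))$ and $\mathcal{U}$ consists of all measurable, uniformly bounded (not necessarily adapted) processes, $u^\delta$ is admissible and $J(u^\delta)\le U+\delta^2$ a.s. This is the same lattice-property argument underlying the paper's Lemma on minimizing sequences, so it is available to you, but it should be said. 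Finally, note that your minimiser $a=-\nabla\Phi(s_0,x_0)$ in the subsolution step is admissible only if the uniform bound $C$ defining $\mathcal{U}$ dominates $\|\nabla\Phi\|_\infty$; this implicit compatibility assumption is shared by the paper's own computation of the essential infimum, so it is not a defect of your argument relative to theirs.
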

\begin{proof}
The continuity of the value function $U$ is established in Proposition \ref{continuity}.
Next we show that if $(U_t)_{t>0}$ satisfies the hypothesis of Definition \ref{viscosity}, then it satisfies \eqref{condition}. For that purpose, we denote by $\Phi(s,x)$ a smooth solution of the following equation 
\begin{align}\label{HJ}
\begin{cases}
     dv(s,x)= -\sqrt{\nu}\nabla v(s,x)\circ dW(t), \quad & \text{in } (0,T) \times \mathbb{R}^n,\\
    v(T,x)=\phi(x), & \text{in } \mathbb{R}^n,
    \end{cases}
\end{align}
Let $(t_0,x_0)\in (0,T) \times \mathbb{R}^n $ be such that the map  $(s,x)\mapsto U(s,x)-\Phi(s,x)-g(s)$ attains a local minima at $(t_0,x_0)$. Consider a ball $B_\varepsilon$ of radius $\varepsilon>0$ centered at $x_0$ and, for some $h>0$, let $(t_n,x_n)\in [t_0-h,t_0+h] \times B_\varepsilon$ be a sequence such that $(t_n,x_n)$ converges to $(t_0,x_0)$ as $n$ goes to infinity. Define a stopping time $\theta_n= \inf\{s \ge t_n \,: (s,Z_s^{t_n,x_n}) \notin [t_n-h,t_n+h] \times B_\varepsilon\}$; then $\theta_n$ converges to $t_0$ as $n$ goes to infinity. Applying It\^{o} formula for non adapted processes to the process $Z_t$ for the map $\Phi(s,x)$, we get
\begin{align*}
\Phi(\theta_n,Z_{\theta_n})-\Phi(t_n, Z_{t_n}) &= \int^{\theta_n}_{t_n}\left( -\sqrt{\nu} \nabla \Phi+ \sqrt{\nu}\nabla \Phi\right)(s,Z_s) \,\circ dW(s)\\
    &+\int^{\theta_n}_{t_n} (u\cdot \nabla)\Phi\,ds\\
    &= \int^{\theta_n}_{t_n} (u\cdot \nabla)\Phi\,ds. 
\end{align*}
With this observation and the Bellman's optimality principle we have
\begin{align*}
    U_{t_n}(x_n)=\essinf_{u\in\mathcal{U}}&\left\{\int_{t_n}^{\theta_n} \left(\frac{u^2(s)}{2} + V(Z_s) \right)ds+U_{\theta_n}(Z_{\theta_n})\right\}\\
    \ge \essinf_{u\in\mathcal{U}}&\bigg\{\int_{t_n}^{\theta_n} \left(\frac{u^2(s)}{2} + V(Z_s) \right)ds+\Phi(\theta_n,Z_{\theta_n})\\
    &\quad +g(\theta_n)+U_{t_n}(x_n)-\Phi(t_n,Z_{t_n})-g(t_n)\bigg\}.
\end{align*}
This gives 
\begin{align*}
    0 \ge \essinf_{u \in \mathcal{U}} \left\{\int_{t_n}^{\theta_n} \left(\frac{u^2(s)}{2} + V(Z_s) \right)ds+\int^{\theta_n}_{t_n} (u \cdot \nabla) \Phi(s,Z_s)\mathrm{d}s\right\}+\int^{\theta_n}_{t_n}g'(s)\mathrm{d}s.
\end{align*}
Dividing both sides by $\theta_n-t_n$ and letting $n$ go to infinity to get
\begin{align*}
    -g'(t_0) &\ge \essinf_{u\in\mathcal{U}}\left\{\frac{u^2(t_0)}{2} +(u\cdot \nabla) \Phi(t_0,Z_{t_0}) \right\}+ V(Z_{t_0})\\
    &=-\frac{|\Phi(t_0,Z_{t_0})|^2}{2}+V(Z_{t_0}).
\end{align*}
and therefore $U$ is a viscosity super-solution. Analogously it can be shown that $U$ is a viscosity sub-solution and this concludes the proof.
\end{proof}

Next we shall prove the comparison result for the viscosity solutions of \eqref{Stratonovich}, where we closely follow the arguments of \cite{seeger2018perron}. We first present a technical lemma.
\begin{lemma}
    Let $u$ and $v$ be viscosity sub-solution and super-solution of \eqref{Stratonovich} respectively, then $w(x,y,z):= u(x,s)-v(y,s)$ is a viscosity sub-solution of 
    \begin{align}\label{doubled}
&    dw(x,y,s) = -\sqrt{\nu}( \nabla_x w + \nabla_y w)\circ \,dW(s) \notag\\ &\qquad+V(x)-V(y) -\left(\frac{|\nabla_x w|^2}{2}-\frac{|\nabla_y w|^2}{2}\right)
\end{align}
where $(x,y,s) \in (B_R(0))^2 \times (0,T)$ and $w(x,y,s)=0$ on $(B_R(0))^2 \times (0,T)$. 
\end{lemma}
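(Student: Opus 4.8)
The plan is to reduce the statement to the single equation \eqref{Stratonovich} by doubling the \emph{time} variable, exploiting the special structure of the admissible test functions. First I would record the analogue of Lemma \ref{solution} for the doubled problem: writing $\zeta(s):=\sqrt{\nu}(W_T-W_s)$, the solution of the stochastic Hamilton--Jacobi equation associated with \eqref{doubled}, namely $d\Theta=-\sqrt{\nu}(\nabla_x\Theta+\nabla_y\Theta)\circ dW$ with terminal data $\theta\in C^2_b$, is $\Theta(s,x,y)=\theta(x+\zeta(s),\,y+\zeta(s))$; this is verified exactly as in Lemma \ref{solution}, since both shifts are driven by the same increment. An admissible test for $w$ at a contact point therefore consists of such a $\Theta$ together with $g\in C^1$, and, writing $(x_0,y_0,s_0)$ for the point where $w-\Theta-g$ attains its local maximum, the goal is the inequality $-g'(s_0)\le V(x_0)-V(y_0)-\tfrac12|\nabla_x\Theta|^2+\tfrac12|\nabla_y\Theta|^2$ evaluated there.

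The obstruction to simply freezing $y=y_0$ and testing $u$ is twofold: the supersolution $v$ is only lower semicontinuous in time, and, more seriously, the frozen map $(s,x)\mapsto\theta(x+\zeta(s),\,y_0+\zeta(s))$ is \emph{not} a solution of \eqref{stochastic Hamilton-Jacobi equation}, because the second shift $y_0+\zeta(s)$ still carries an $s$-dependence through $\zeta$. Both difficulties are removed at once by doubling the time variable and decoupling the two shifts. Concretely, for $\beta>0$ I would maximise, over a fixed compact neighbourhood of $(x_0,s_0,y_0,s_0)$ and with a standard localisation penalty, the functional $u(x,s)-v(y,s')-\theta(x+\zeta(s),\,y+\zeta(s'))-g(\tfrac{s+s'}{2})-\tfrac{1}{2\beta}(s-s')^2$. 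Once the two shifts sit at the distinct times $s$ and $s'$, freezing $(y,s')$ leaves the map $(s,x)\mapsto\theta(x+\zeta(s),\,\cdot\,)$, which \emph{is} a solution of \eqref{stochastic Hamilton-Jacobi equation}, together with the $C^1$ time function $g(\tfrac{s+s'}{2})+\tfrac{1}{2\beta}(s-s')^2$; symmetrically, freezing $(x,s)$ yields an admissible test for $v$. This is the crucial point that makes Definition \ref{viscosity} applicable.

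By the standard doubling lemma the maximisers $(x_\beta,s_\beta,y_\beta,s'_\beta)$ converge to $(x_0,s_0,y_0,s_0)$ with $\tfrac1\beta(s_\beta-s'_\beta)^2\to0$. Applying Definition \ref{viscosity} to $u$ as a subsolution at $(s_\beta,x_\beta)$ gives $-\tfrac12 g'-\tfrac1\beta(s_\beta-s'_\beta)\le V(x_\beta)-\tfrac12|\partial_1\theta|^2$, and to $v$ as a supersolution at $(s'_\beta,y_\beta)$ gives $\tfrac12 g'-\tfrac1\beta(s_\beta-s'_\beta)\ge V(y_\beta)-\tfrac12|\partial_2\theta|^2$, where $g'$ is evaluated at $\tfrac{s_\beta+s'_\beta}{2}$ and the gradients of $\theta$ at the respective shifted arguments. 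Subtracting the two cancels the penalisation term $\tfrac1\beta(s_\beta-s'_\beta)$ and combines the two halves of $g'$ into $-g'$, leaving $-g'\le V(x_\beta)-V(y_\beta)-\tfrac12|\partial_1\theta|^2+\tfrac12|\partial_2\theta|^2$. Letting $\beta\to0$ and using the continuity of $\zeta$, $\theta$, $V$ and of $u,v$ (Proposition \ref{continuity}), together with $\nabla_x\Theta=\partial_1\theta$ and $\nabla_y\Theta=\partial_2\theta$ at the contact point, recovers precisely the subsolution inequality for \eqref{doubled}; the homogeneous boundary condition is inherited from those of $u$ and $v$ on $\partial B_R(0)$. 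I expect the main obstacle to lie in this penalisation bookkeeping: one must verify carefully that the decoupled, frozen shift is a bona fide solution of \eqref{stochastic Hamilton-Jacobi equation} and that the time functions are $C^1$, so that Definition \ref{viscosity} truly applies, and that the convergence of the maximisers is strong enough to pass to the limit in the nonlinear gradient terms.
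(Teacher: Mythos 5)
Your proposal is correct and follows essentially the same route as the paper: both double only the time variable with a quadratic penalization so that, once one pair of variables is frozen, the shifted test function $\theta(x+\zeta(s),\cdot)$ becomes a genuine solution of \eqref{stochastic Hamilton-Jacobi equation}, and the sub/supersolution inequalities are then subtracted so the penalization terms cancel before letting the penalization parameter go to zero. The only cosmetic difference is that you attach $g$ to the midpoint $\tfrac{s+s'}{2}$ (splitting $g'$ between the two inequalities), whereas the paper attaches $\psi$ entirely to the $s$ variable; both bookkeeping choices yield the same limit inequality.
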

\begin{proof}
    Let  $\psi \in C^1(0,T)$ such that 
    \begin{align}\label{zee}
    z(x,y,s):=w(x,y,s)- \Phi(x,y,s)-\psi(s)
    \end{align}
    attains a strict maximum at $(x_0,y_0,s_0)\in (B_R(0))^2 \times (0,T)$, where $\Phi(x,y,s)$  solves 
\begin{align}\label{noise}
d\Phi = -\sqrt{\nu}( \nabla_x \Phi + \nabla_y \Phi) \circ dW(s).
\end{align}
In fact with arguments similar to the ones used in the proof of Proposition \ref{solution}, one concludes that the classical solution of \eqref{noise} is given by $\Phi(x,y,s)=\Phi(X^{-1}_s, Y^{-1}_s,T)$ where $X^{-1}_s= x+\sqrt{\nu}(W_T-W_s), Y^{-1}_s= y+ \sqrt{\nu}(W_T-W_s)$ and $\Phi(\cdot,\cdot, T)$ denotes the final data defined on $(B_R(0))^2$.\\
For a fixed $0<\delta <1$, consider the quantity
\begin{align}\label{doubling}
\Psi_\delta(x,y,s,r):=u(x,s)- v(y,r)- \Phi(X^{-1}_s,Y^{-1}_r,T)- \psi(s)- \frac{|s-r|^2}{2\delta}
\end{align} 
in the compact set $\Omega:= \overline{B}(x_0)\times \overline{B}(y_0) \times [s_0-h,s_0+h]^2$, for some $h>0$, where $\overline{B}(x_0)$ and $\overline{B}(y_0)$ denote balls of sufficiently small radius around $x_0$ and $y_0$. Assume that $\Psi_\delta$ attains a maximum at a point $(x_\delta, y_\delta, s_\delta, r_\delta)\in \Omega$ (guaranteed as  $u-v$ is upper semi-continuous). First notice that since we are in a compact domain $\Omega$, there exists $M>0$ such that $\Psi_\delta(x,y,s,r)+\frac{|s-r|^2}{2\delta} \le M$; then we must have $\frac{|s_\delta-r_\delta|^2}{2\delta} \le 2M$ for all $\delta>0$, because if there exists some $\delta >0$ such that $\frac{|s_\delta-r_\delta|^2}{2\delta}> 2M$, then $\Psi_\delta(x_\delta, y_\delta, s_\delta, r_\delta) < -M < \Psi_\delta(x_\delta,y_\delta,s_0,s_0)$, contradicting the fact that $(x_\delta, y_\delta, s_\delta, r_\delta)$ is a maximum. Thus we conclude that $\lim\limits_{\delta \to 0}|s_\delta -r_\delta| =0$.
Next we define $\Psi_1(x,y,s,r):= u(x,s)-v(y,r)-\Phi(X^{-1}_s,Y^{-1}_r,t)-\psi(s) -|s-r|^2$ and show that  $\lim_{\delta \to 0} \Psi_1(x_\delta,y_\delta,s_\delta,r_\delta)= z(x_0,y_0,s_0)$, where $z$ is defined in \eqref{zee}. To do this we write 
\begin{align*}
    \Psi_1(x,y,s,r) &= u(x,s) -v(y,s)- \Phi(X^{-1}_s,Y^{-1}_s,t)-\psi(s) - |s-r|^2\\
    &+(v(y,s)-v(y,r))+(\Phi(X^{-1}_s,Y^{-1}_s,t)-\Phi(X^{-1}_s,Y^{-1}_r,t))\\
    &< u(x_0,s_0)-v(y_0,s_0) - \Phi(X^{-1}_{s_0}, Y^{-1}_{s_0},t)-\psi(s_0)\\
    &+(v(y,s)-v(y,r))+(\Phi(X^{-1}_s,Y^{-1}_s,t)-\Phi(X^{-1}_s,Y^{-1}_r,t))
\end{align*}
where in the last inequality we have used the fact that $z(x,y,s)$ attains a strict maximum at $(x_0,y_0,s_0)$.We now use the fact that $s_\delta -r_\delta \to 0$ as $\delta \to 0$, the upper semi-continuity of $v$ and the continuity of $\Phi$ to conclude that $$\limsup_{\delta\to 0} \Psi_1(x_\delta,y_\delta,s_\delta,r_\delta) \le u(x_0,s_0)-v(y_0,s_0) - \Phi(X^{-1}_{s_0}, Y^{-1}_{s_0},t)-\psi(s_0).$$
On the other hand, for small $\delta >0$ we have $\Psi_1(x_\delta, y_\delta,s_\delta, r_\delta)>\Psi_\delta(x_\delta, y_\delta,s_\delta, r_\delta) \ge \Psi_\delta(x_0,y_0,s_0,s_0)$ $=z(x_0,y_0,s_0)$ implying that $\liminf\limits_{\delta \to 0}\Psi_1(x_\delta,s_\delta,y_\delta,r_\delta) \ge  z(x_0,y_0,s_0)$ and thus we can conclude that
$\lim\limits_{\delta \to 0}\Psi_1(x_\delta,s_\delta,y_\delta,r_\delta)$ exists and equals $z(x_0,y_0,s_0).$

We are now in a position to claim that the sequence $(x_\delta, y_\delta, s_\delta, r_\delta)$ converges to $(x_0,y_0,s_0,s_0)$ as $\delta$ goes to zero. To do this it suffices to show that any arbitrary sub-sequence $(x_{\delta_k}, y_{\delta_k}, s_{\delta_k}, r_{\delta_k})$ of the original sequence $(x_\delta, y_\delta, s_\delta, r_\delta)$ converges to $(x_0,y_0,s_0,s_0)$ since $(x_\delta, y_\delta, s_\delta, r_\delta)$ not converging (in a compact set) would mean existence of two sub-sequences converging to two different limits. If possible, let $(x_{\delta_k}, y_{\delta_k}, s_{\delta_k}, r_{\delta_k})$ converge to $(\tilde x_0, \tilde y_0, \tilde s_0, \tilde s_0)$; then from the upper semi-continuity of $\Psi_1$ we have  $\lim\limits_{k \to \infty} \Psi_1(x_{\delta_k}, y_{\delta_k}, s_{\delta_k}, r_{\delta_k})\le  \Psi_1(\tilde x_0, \tilde y_0, \tilde s_0, \tilde s_0)=z(\tilde x_0,\tilde y_0,\tilde s_0)$ and also from the preceding arguments we know that $\lim\limits_{k \to \infty} \Psi_1(x_{\delta_k}, y_{\delta_k}, s_{\delta_k}, r_{\delta_k})=z(x_0,y_0,s_0)$.
This implies that $z(\tilde x_0,\tilde y_0,\tilde s_0)\ge z(x_0,y_0,s_0)$. But, as $(x_0,y_0,s_0)$ is a strict maximum of $z(x,y,s)$, we must have $(\tilde x_0,\tilde y_0,\tilde s_0)=(x_0,y_0,s_0)$. Therefore the original sequence $(x_\delta,y_\delta,s_\delta,r_\delta)$ converges to $(x_0,y_0,s_0,s_0)$ as $\delta$ goes to zero, which implies that for small $\delta$, we can assume that $(x_\delta,y_\delta,s_\delta,r_\delta)$ lies in the open set $\Omega^{\mathrm{o}}:= B(x_0)\times B(y_0) \times (s_0-h,s_0+h)^2$.

We now use the fact that $u(x,s)$ is a viscosity sub-solution and note that the map $$\psi^u:(x,s) \mapsto v(y_\delta,r_\delta)+ \Phi(X^{-1}_s,Y^{-1}_{r_\delta},T)+\psi(s)+\frac{|s-r_\delta|^2}{2\delta}$$  is a test function such that $u(x,s)-\psi^u$ attains a maximum at $(x_\delta,s_\delta)$ from $\eqref{doubling}$; therefore we get

\begin{align}\label{sub}
-\left(\psi'(s_\delta) +\frac{(s_\delta - t_\delta)}{\delta}\right) \le V(x_\delta) - \frac{|\nabla \Phi(X^{-1}_{s_\delta},Y^{-1}_{r_\delta},T)|^2}{2}.
\end{align}
Similarly the map 
$$\psi^v:(y,r) \mapsto u(x_\delta,s_\delta)- \Phi(X^{-1}_{s_\delta},Y^{-1}_r,T)-\psi(s_\delta)-\frac{|s_\delta-r|^2}{2\delta} $$ is a test function such that
$v(y,r)-\psi^v$ attains a minimum at $(y_\delta,r_\delta)$ from \eqref{doubling}; using the fact that $v(y,r)$ is a viscosity super-solution we get 
\begin{align}\label{super}
    -\frac{s_\delta-r_\delta}{\delta} \ge V(y_\delta)- \frac{|\nabla \Phi(X^{-1}_{s_\delta},Y^{-1}_{r_\delta},T)|^2}{2}.
\end{align}
Combining $\eqref{sub}$ and $\eqref{super}$, we have 
\begin{align*}
   - \psi'(s_\delta) &\le V(x_\delta)-V(y_\delta)-\left(\frac{|\nabla_x \Phi(X^{-1}_{s_\delta},Y^{-1}_{r_\delta},T)|^2}{2} -\frac{|\nabla_y \Phi(X^{-1}_{s_\delta},Y^{-1}_{r_\delta},T)|^2}{2}\right)
\end{align*}
Letting $\delta$ go to zero, using continuities of $\psi$, $V$ and $\nabla \Phi$, we get
$$ -\psi'(s_0) \le (V(x_0)-V(y_0)) -\frac12 \left(| \nabla_x \Phi(x_0,y_0,s_0)|^2-| \nabla_y \Phi(x_0,y_0,s_0)|^2\right).$$
Therefore we conclude that  $w(x,y,s)=u(x,s)-v(y,s)$ is a viscosity sub-solution of \eqref{doubled}.
\end{proof}

\begin{prop}[Comparison principle for viscosity solutions]\label{compare}
Let $u$ and $v$ be viscosity sub-solution and super-solution respectively of \eqref{Stratonovich}. Then 
$$\sup_{x \in B_R(0)}( u(x,s)-v(x,s))_+= \sup_{x \in B_R(0)}(u(x,0)-v(x,0))_+$$ for all $s\in [0,T]$.
\begin{proof}
 We first note that it suffices to prove the comparison principle for the case when $u(x,0)\le v(x,0)$ for all $x\in B_R(0)$. Otherwise we work with $v(x,s)+c$ instead of $v(x,s)$ for some constant $c>0$  (for e.g. $c= \|S'\|_\infty$) and make use of the fact that $v+c$ is a viscosity super-solution of \eqref{Stratonovich} as well because $F$ in the Definition \ref{viscosity} is independent of the solution $u$. Therefore it suffices to prove that $\sup_{x \in B_R(0)}(u(x,s)-v(x,s))_+ = 0$ for all $s\in [0,T]$. On the contrary, suppose there exists $s_0\in [0,T]$ such that $\sup_{x \in B_R(0)}(u(x,s_0)-v(x,s_0))_+ > 0$. Choose $\mu>0$ small such that $\sup\limits_{x \in B_R(0)}( u(x,s_0)-v(x,s_0))> \mu s_0$. Note that the map $\Phi_\epsilon(x,y,s):= \frac{1}{2\epsilon}|x-y|^2$ is a classical solution of \eqref{noise},  consider the quantity $$u(x,s)-v(y,s)- \frac{1}{2\epsilon}|x-y|^2-\mu s$$
and let $(x_\epsilon, y_\epsilon, s_\epsilon)$ be a point of maxima. Then we use the fact that $u-v$ is a viscosity sub-solution of \eqref{doubled}  to conclude that 
\begin{align*}
\mu \le V(x_\epsilon)-V(y_\epsilon);    
\end{align*}
letting $\epsilon$ go to zero and using the fact that $\lim\limits_{\epsilon \to 0}(x_\epsilon-y_\epsilon)=0$ (similar to the proof of $\lim\limits_{\delta \to 0}(s_\delta -r_\delta)=0$), we deduce that $\mu \le 0$, which contradicts the choice of $\mu$.
\end{proof}
\end{prop}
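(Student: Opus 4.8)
The plan is to argue by contradiction through the doubling-of-variables method, exploiting the crucial structural fact that the quadratic penalty serving as our test profile is itself a solution of the noise equation \eqref{noise}, so that the stochastic terms drop out and the comparison reduces to a purely deterministic estimate on $V$. First I would reduce to the situation $u(\cdot,0)\le v(\cdot,0)$ on $B_R(0)$: if this fails, replace $v$ by $v+c$ with $c=\sup_{x\in B_R(0)}(u(x,0)-v(x,0))_+$. Because the operator $F(Dv,x)=V(x)-\tfrac12|Dv|^2$ appearing in Definition \ref{viscosity} does not depend on the value $v$ itself, $v+c$ is again a viscosity super-solution of \eqref{Stratonovich}, and the assertion becomes $\sup_{x\in B_R(0)}(u(x,s)-v(x,s))_+=0$ for all $s$, i.e. $u\le v$.

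Supposing for contradiction that $\sup_x(u(x,s_0)-v(x,s_0))_+>0$ for some $s_0$, I fix $\mu>0$ small enough that $\sup_x(u(x,s_0)-v(x,s_0))>\mu s_0$, and maximize over the compact set $\overline{B_R(0)}^2\times[0,T]$ the penalized expression
\begin{align*}
u(x,s)-v(y,s)-\frac{1}{2\epsilon}|x-y|^2-\mu s,
\end{align*}
obtaining a maximizer $(x_\epsilon,y_\epsilon,s_\epsilon)$ by upper semicontinuity. The key point is that $\Phi_\epsilon(x,y,s):=\tfrac{1}{2\epsilon}|x-y|^2$ is a classical solution of \eqref{noise}: since $\nabla_x\Phi_\epsilon+\nabla_y\Phi_\epsilon=\tfrac1\epsilon(x-y)+\tfrac1\epsilon(y-x)=0$, the Stratonovich term vanishes and the time-independent $\Phi_\epsilon$ trivially solves \eqref{noise}. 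Hence $\Phi_\epsilon$, together with $\psi(s)=\mu s$, is an admissible test pair for the viscosity sub-solution $w=u-v$ of the doubled equation \eqref{doubled} supplied by the preceding lemma.

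Applying the sub-solution inequality for \eqref{doubled} at $(x_\epsilon,y_\epsilon,s_\epsilon)$, and observing that $|\nabla_x\Phi_\epsilon|^2=|\tfrac1\epsilon(x_\epsilon-y_\epsilon)|^2=|\nabla_y\Phi_\epsilon|^2$ so that the two gradient-squared contributions cancel, I am left with $\mu\le V(x_\epsilon)-V(y_\epsilon)$. Finally, the standard penalization bound (boundedness of $\tfrac{1}{2\epsilon}|x_\epsilon-y_\epsilon|^2$ on the compact domain, exactly as in the estimate $|s_\delta-r_\delta|\to 0$ of the preceding lemma) forces $|x_\epsilon-y_\epsilon|\to 0$ as $\epsilon\to 0$; the Lipschitz continuity of $V$ then gives $V(x_\epsilon)-V(y_\epsilon)\to 0$, whence $\mu\le 0$, contradicting $\mu>0$.

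I expect the main obstacle to be conceptual rather than computational. In the pathwise viscosity framework of Definition \ref{viscosity} one cannot double variables with an arbitrary penalty: the test profile must itself solve the noise equation \eqref{noise}, and the insight is that $\tfrac{1}{2\epsilon}|x-y|^2$ is precisely the right choice, since it simultaneously annihilates the noise through the operator $\nabla_x+\nabla_y$ and plays the role of the usual variable-doubling penalty. Once this is in place the stochastic difficulty disappears and only routine deterministic steps remain; the residual care is to ensure the maximizers stay in the open interior, so that the interior viscosity inequalities apply, and to justify the penalization limit $x_\epsilon-y_\epsilon\to 0$.
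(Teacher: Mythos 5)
Your proposal is correct and follows essentially the same argument as the paper's proof: the same reduction to $u(\cdot,0)\le v(\cdot,0)$ via $v+c$ (exploiting that $F$ does not depend on the solution itself), the same penalized quantity $u(x,s)-v(y,s)-\tfrac{1}{2\epsilon}|x-y|^2-\mu s$ with $\Phi_\epsilon$ solving the noise equation \eqref{noise}, the same invocation of the doubled-equation lemma to get $\mu\le V(x_\epsilon)-V(y_\epsilon)$, and the same penalization limit $|x_\epsilon-y_\epsilon|\to 0$ yielding the contradiction. The only differences are cosmetic: you spell out why $\nabla_x\Phi_\epsilon+\nabla_y\Phi_\epsilon=0$ kills the Stratonovich term and why the gradient-squared terms cancel, details the paper leaves implicit.
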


\begin{corollary} There exists a unique viscosity solution of \eqref{Stratonovich} in the sense of Definition \ref{viscosity}.
\begin{proof}
Let $u$ and $v$ be two viscosity solutions of \eqref{Stratonovich} in the sense of Definition \ref{viscosity}. From Proposition \ref{compare}, we get $\sup\limits_{x \in B_R(0)}|u(x,s)-v(x,s)| \le 0$ for all $s\in[0,T]$, which implies the uniqueness of viscosity solution for \eqref{Stratonovich} on $B_R(0)$. Since $R>0$ is arbitrary, we have the uniqueness of viscosity solutions in $\mathbb{R}^n$. 
\end{proof}
\end{corollary}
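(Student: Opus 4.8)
The plan is to read the corollary as the conjunction of two claims and to dispatch each separately: existence is already available, so the entire substance of the argument is deducing uniqueness from the comparison principle of Proposition \ref{compare}, which I treat as a black box.

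For existence I would simply invoke the earlier existence result: the value process $U$ defined by \eqref{value process} has been shown to be a viscosity solution of \eqref{Stratonovich} in the sense of Definition \ref{viscosity}. No new work is needed, so the remaining content of the corollary is uniqueness. Here I would take two arbitrary viscosity solutions $u$ and $v$ of \eqref{Stratonovich} and first recall that, by Definition \ref{viscosity}, a viscosity solution is simultaneously a sub-solution and a super-solution. This is the key bookkeeping point that lets me feed the same pair of functions into Proposition \ref{compare} in both orders.

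I would then apply Proposition \ref{compare} twice: once with $u$ as the sub-solution and $v$ as the super-solution, and once with the roles reversed. The first application yields, on each ball $B_R(0)$,
\[
\sup_{x \in B_R(0)} (u(x,s)-v(x,s))_+ = \sup_{x \in B_R(0)} (u(x,0)-v(x,0))_+ ,
\]
and the second gives the analogous identity for $(v-u)_+$. Since both $u$ and $v$ carry the same data, the right-hand sides vanish, so $(u-v)_+$ and $(v-u)_+$ are identically zero on $B_R(0)$ for every $s\in[0,T]$; hence $\sup_{x\in B_R(0)}|u(x,s)-v(x,s)| \le 0$. Finally, because $R>0$ is arbitrary and the conclusion $u\equiv v$ holds on every $B_R(0)\times[0,T]$, letting $R\to\infty$ gives $u=v$ on $\mathbb{R}^n\times[0,T]$, which is exactly uniqueness.

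The hard part is not in this assembly but already absorbed into Proposition \ref{compare}. What remains genuinely delicate when stitching the corollary together is twofold: first, confirming that the two solutions really share the data that forces the reference-time suprema on the right-hand sides to vanish, so that the comparison identities collapse to $u\le v$ and $v\le u$; and second, checking that the localization to $B_R(0)$ — with the homogeneous boundary condition $w=0$ built into the doubled equation \eqref{doubled} — is compatible with the limit $R\to\infty$, i.e. that the vanishing of the reference-time defect holds uniformly in $R$ so that no obstruction appears as the balls exhaust $\mathbb{R}^n$.
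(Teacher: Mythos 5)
Your overall route coincides with the paper's: existence is exactly the earlier proposition showing the value process $U$ solves \eqref{Stratonovich} in the sense of Definition \ref{viscosity}, and uniqueness comes from feeding two solutions into Proposition \ref{compare} in both orders, localizing on $B_R(0)$, and using that $R$ is arbitrary. However, the one step you wrote down to close the argument does not work as stated, and it is precisely the point you flag at the end without resolving: you claim that ``since both $u$ and $v$ carry the same data, the right-hand sides vanish,'' but the right-hand sides in Proposition \ref{compare} are suprema at the reference time $s=0$, whereas the data shared by $u$ and $v$ is \emph{terminal} data at $s=T$ --- Definition \ref{viscosity} only imposes $u(\cdot,T)\le S$ for a sub-solution and $v(\cdot,T)\ge S$ for a super-solution, and nothing forces $u(\cdot,0)$ and $v(\cdot,0)$ to agree a priori. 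The repair is short but requires an extra use of the comparison identity: evaluate it at $s=T$, where the terminal inequalities give $(u(x,T)-v(x,T))_+=0$, so that
\begin{align*}
\sup_{x\in B_R(0)}\bigl(u(x,0)-v(x,0)\bigr)_+ \;=\; \sup_{x\in B_R(0)}\bigl(u(x,T)-v(x,T)\bigr)_+ \;=\;0,
\end{align*}
and only then invoke the identity again for arbitrary $s$ to conclude $(u-v)_+\equiv 0$ on $B_R(0)\times[0,T]$; the symmetric application gives $(v-u)_+\equiv 0$. Without routing through $s=T$ first, the vanishing of the time-$0$ supremum is unjustified.

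Your second worry --- that the vanishing must hold ``uniformly in $R$'' to survive the limit $R\to\infty$ --- is a non-issue. Once $u=v$ on $B_R(0)\times[0,T]$ for every $R>0$, any point $(x,s)\in\mathbb{R}^n\times[0,T]$ lies in some such cylinder, so $u=v$ everywhere; no uniformity or limiting procedure is needed, and this pointwise exhaustion is exactly how the paper concludes.
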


\begin{rem}
    The proofs in this section can be extended to a general SPDE 
    \begin{align}
\begin{cases}\label{generic}
     dv(s,x)= -\sqrt{\nu}\nabla v(s,x)\circ dW(s)+\left( V(x)+\essinf\limits_{u \in \mathcal{U}}\left(L(u)+u\cdot \nabla v\right)\right)\mathrm{d}s, \quad &\text{in } (0,T)\times \mathbb{R}^n,\\
    v(T,x)=S(x), & \text{on } \mathbb{R}^n.
    \end{cases}
\end{align}
where $L$ is a Lipschitz map. Notice that \eqref{generic} reduces to \eqref{Stratonovich} for the case when $L(u)=\frac{u^2}{2}$. One way to extend the results to the case $L(u)$ is to impose the condition of equicontinuity in the time variable on the set of admissible controls $\mathcal{U}$. This SPDE corresponds to the control problem where the cost function and state equation is given by 
\begin{align*}
	J_{t,x}(Z,u )&=\int_{t}^{T} \left(\frac{u^2}{2}(\omega,s) + V(Z_s) \right)\mathrm{d}s+S(Z_T),\\
dZ_s&=u(\omega,s)\mathrm{d}s+\nu^{1/2}dW(s),\quad Z_t=x \text{ and }0\leq t\leq s\leq T,
\end{align*}
\end{rem}

\begin{rem}
    Our results are also extendable to more generic state equations of the form  
$$dZ_s=u(\omega,s)\mathrm{d}s+\sqrt{\nu}\theta(Z_s)dW(s),\quad Z_t=x \text{ and }0\leq t\leq s\leq T$$
and in this case the value function will satisfy the following SPDE
\begin{align*}
\begin{cases}
     dv(s,x)= -\sqrt{\nu}\theta(x)\nabla v(s,x)\circ dW(s)+\left( V(x)-\frac{|\nabla u|^2}{2} \right)\mathrm{d}s, \quad &\text{in } (0,T)\times \mathbb{R}^n,\\
    v(T,x)=S(x), & \text{on } \mathbb{R}^n.
    \end{cases}
\end{align*}
\end{rem}

\section{On the optimal drift}\label{characterization}
\subsection{Characterization}
In this section, we present a derivation of the SPDE satisfied by the optimal drift of the process using stochastic calculus of variations, assuming the optimal control is attained and is of the form 
$u (\omega ,s)= u^* (\omega, Z_s)$, where $u^*$ is a non necessarily adapted process.
We have worked in a larger class of controls and it is an open question weather this process $u^*$ is well defined.\color{black}

Let the process $Z$, which satisfies the SDE $dZ_s =u^*_s (Z_s )ds +\nu^{\frac{1}{2}} dW_s, ~s\in [t,T], Z_t=x$, be a critical process for the action functional $J$ given by \eqref{random functional}. We have, for every stochastic process $h$ differentiable in time, with time derivative in $L^2$ and such that $h(t)=0$,
$$\left.\frac{d}{d\varepsilon}\right|_{\varepsilon=0} J(Z+\varepsilon h) = \int_t^T u_s^* (Z_s ) .\dot  h(s) ds +\int_t^T \nabla V(Z_s). h(s) ds  +\nabla S(Z_T ). h(T).$$
 As $h$ is of bounded variation, then
$$d\left(u^*_s (Z_s )). h(s)\right)= d\left(u^*_s (Z_s) \right). h(s) + u^*_s (Z_s ).\dot h(s) ds,$$ and since $h(t)=0$, we have, for all such $h$, 
$$\left.\frac{d}{d\varepsilon}\right|_{\varepsilon=0} J(Z+\varepsilon h) = \int_t^T h(s). \left[-d(u^*_s (Z_s ))+\nabla V(Z_s)\right]ds +\left[\nabla S(Z_T )+u^*_T (Z_T )\right].h(T)$$
Therefore the following equation holds (a.e.) for the critical process $Z$:
\begin{equation}\label{sde u}
d\left(u^*_s (Z_s )\right)=\nabla V(Z_s )ds, \quad u^*_T  (Z_T )=-\nabla S (Z_T).
\end{equation}

Let  $u^*$ be of the form 
 $du^* = \sum _j X_j (u^* ) \circ dW_t^j +X_0 (u^* ) dt$, where the Stratonovich integral should be interpreted in the non-adapted sense. By  It\^o-Wentzell's formula \eqref{Ito-Wentzell formula} 
we have,
\begin{align*}
	d u^*_s  (Z_s )&= \sum_j \left[X_j (u_s^* )+\nu^{\frac{1}{2}} \partial_j u^*_s \right] ( Z_s) \circ dW_s^j \\
	&+\left[X_0 (u^* )  +(u_s^* .\nabla ) u^*_s \right] (Z_s )ds
\end{align*}
From Equation \eqref{sde u} we deduce that $X_j (u^* )= -\nu^{\frac{1}{2}}\partial_j  (u^* ) $ and, a posteriori, that
$X_0 (u^* )=  -(u^* .\nabla )u^* +\nabla V $. This means that the drift of the stochastic control problem satisfies the non-adapted SPDE
\begin{align}\label{drift}
	du^*_s (x)  =-\nu^{\frac{1}{2}}\nabla  u^*_s (x)\circ dW_s -\left(  (u^*_s.\nabla )u^*_s   -\nabla V \right)(x)ds
\end{align}
for all $s\in [t,T]$, with boundary condition $u^*_T (x) =-\nabla S(x)$.

\begin{rem}
Backward SPDEs such as \eqref{drift}  also appear in the study of doubly stochastic differential equations \cite{PP}. 
\end{rem}

\subsection{Conserved quantities}\label{symmetries}

It is well known that Noether-type theorems are important in Physics, but also in the construction of numerical methods that preserve symmetries, for example.
Conserved quantities for   stochastic action functionals defined as the expectation of our pathwise action $J$ were studied in \cite{Zambrini} and \cite{Huang}, in particular.  In that context, the 
 corresponding constants of motion are martingales and are related to invariants of the associated deterministic Hamilton-Jacobi-Bellman equation.

 In this subsection, we initiate the characterization of space-time transformations that leave our action functional invariant.
 \color{black}
 
 Consider a smooth (possibly random) vector field $Y: ]t,T[\times \mathbb R^n \rightarrow [t,T]\times \mathbb R^n$ of the form
 $Y(s,x)= (T(s), X(s,x))$. Denote by $\Phi_{\epsilon} =(\varphi_{\epsilon}^0 , \varphi_{\epsilon})$ the flow generated by $Y$. In particular,
 
 $$\left.\frac{d}{d\epsilon}\right|_{\epsilon=0} \varphi_{\epsilon}^0 (t) = T(t) , \quad \left.\frac{d}{d\epsilon}\right|_{\epsilon=0}\varphi_{\epsilon} (t,x)= X(t,x).$$
In order to stay closer to the intuition of  the classical counterparts of Lagrangian symmetries, we shall denote, for a stochastic process of the form $d\xi_t =dM_t + \eta_t dt$, its bounded variation part by $D_t \xi_t =\eta_t$. 

  \begin{defi}
  A vector field $Y$ as above is called a Lagrangian infinitesimal variation symmetry of the action functional $J$ if its flow is conserved in the sense that if, for every $ t_1 ,t_2 \in [t,T], t_1 <t_2 $ and every $\epsilon >0$, we have, almost-surely,
\begin{align}\label{VS}
 \int_{t_1}^{t_2} \Big( \frac{1}{2} | D(Z_s)|^2 +V(Z_s) \Big) \mathrm{d}s =\int_{\varphi_{\varepsilon}^0 (t_1 )}^{\varphi_{\varepsilon}^0 (t_2 )}
  \Big( \frac{1}{2} | D (\varphi_{\varepsilon} (Z_{(\varphi_{\varepsilon}^0 )^{-1}(s)}))|^2 +V(\varphi_{\varepsilon}(Z_{(\varphi_{\varepsilon}^0 )^{-1}(s)}))\mathrm{d}s.
  \end{align}
  \end{defi}
 Consider  a vector field $Y$ as above. If $Y$ is an infinitesimal variation symmetry of the action functional $J$,
  by derivating \eqref{VS}  in $\varepsilon$ at $\varepsilon =0$, we obtain, for every $t_1 ,t_2$, the following equation, that characterizes symmetries of the action:
\begin{align}\label{derivative}
 \left(\langle DZ, DX \rangle - \left(\frac{1}{2} |DZ |^2 -V\right) \dot T +\langle \nabla V, X\rangle \right) (s, Z_s )=0
\end{align}
the equality holding  almost surely.

If $u^*$ is the drift of the minimising process, since $d\mathrm{u^*} (s, Z_s )=\nabla V(Z_s)ds$, we have
 $$D( \langle X, u^* \rangle )(s, Z_s ) =(  \langle DX,u^* \rangle +\langle X,\nabla V \rangle ) (s, Z_s ).$$
 On the other hand,
 $$D \left(\frac{1}{2} |u^* |^2\right)(s, Z_s) =\langle u^*, \nabla V \rangle (s,Z_s ); $$
 therefore, using \eqref{derivative}, we have
 $$ D\left( \langle X,u^*\rangle -T \left(\frac{1}{2} |u^* |^2 -V\right)\right) (s,Z_s )=-T (\langle u^* ,\nabla V \rangle -DV) (s,Z_s ) $$
and, finally,
$$ D\left( \langle X,u^* \rangle -T \left(\frac{1}{2} |u^* |^2 -V\right)\right) (s,Z_s ) =\frac{\nu}{2} T (s) \Delta V(Z_s ). $$
For infinitesimal variations symmetries where $T=0$, or for action functionals with harmonic potential functions, we obtain the following associated ``conserved quantities", in the spirit of \cite{Zambrini} and \cite{Huang} (namely stochastic processes with vanishing bounded variation part): 
$$\left( \langle X,u^* \rangle -T \left(\frac{1}{2} |u^* |^2 -V\right)\right) (s,Z_s ) .$$
An obvious example is derived from time translation, namely $T=1, X=0$, when $V$ is harmonic. Another example is $V$=0, $T=0$ and 
$X(x)=Rx$, where $R$ is a rotation matrix.

It will be interesting  to relate these invariant quantities to the symmetries of Hamilton-Jacobi-Bellman stochastic equations as it is done for  deterministic PDEs.

\section{Data availability}
We do not consider data in the analysis we perform in this paper, because we address a given problem with a theoretical approach. All the material needed to understand this paper can be found in the references.

\section{Acknowledgements}
The first and second authors acknowledge the support of the FCT project UIDB/00208/2020. The third author would like to thank the FCT project CEMAPRE/REM-UIDB/05069/2020.

\bibliographystyle{plain}
\bibliography{Pathwise-OC}
\end{document}